\renewcommand{\geq}{\geqslant}
\renewcommand{\leq}{\leqslant}
\renewcommand{\ge}{\geqslant}
\renewcommand{\le}{\leqslant}
\def\eref#1{$(\ref{#1})$}
\def\sref#1{\S$\ref{#1}$}
\def\lref#1{Lemma~$\ref{#1}$}
\def\tref#1{Theorem~$\ref{#1}$}
\def\Tref#1{Table~$\ref{#1}$}
\def\fref#1{Figure~$\ref{#1}$}
\def\cref#1{Corollary~$\ref{#1}$}
\def\dref#1{Definition~$\ref{#1}$}
\title{On Perfect Sequence Covering Arrays}
\author{Aidan R. Gentle and Ian M. Wanless\\
\small School of Mathematics\\[-0.5ex]
\small Monash University\\[-0.5ex]
\small Vic 3800, Australia\\
\small\tt \{aidan.gentle, ian.wanless\}@monash.edu}
\date{}
\newtheorem{theorem}{Theorem}[section]
\newtheorem{lemma}[theorem]{Lemma}
\newtheorem{cor}[theorem]{Corollary}
\theoremstyle{definition}
\theoremstyle{definition}
\newtheorem{definition}[theorem]{Definition}
\newcommand{\sym}{\mathcal{S}}
\begin{document}
\maketitle

\begin{abstract}
  A PSCA$(v, t, \lambda)$ is a multiset of permutations of the
  $v$-element alphabet $\{0, \dots, v-1\}$ such that every sequence of
  $t$ distinct elements of the alphabet appears in the specified order
  in exactly $\lambda$ of the permutations. For $v \geq t \geq 2$, we
  define $g(v, t)$ to be the smallest positive integer $\lambda$ such
  that a PSCA$(v, t, \lambda)$ exists. We show that $g(6, 3) = g(7, 3)
  = g(7, 4) = 2$ and $g(8, 3) = 3$. Using suitable permutation
  representations of groups we make improvements to the upper bounds
  on $g(v, t)$ for many values of $v \leq 32$ and $3\le t\le 6$.  We
  also prove a number of restrictions on the distribution of symbols
  among the columns of a PSCA.
\end{abstract}

\section{Introduction}\label{s:intro}

For positive integers $v$ and $t$ with $v \geq t$, we let $[v] = \{ 0, \dots, v-1 \}$, $\sym_{v}$ be the set of permutations of $[v]$ and $\sym_{v, t}$ be the set of ordered sequences of $t$ distinct elements of $[v]$. For $\pi \in \sym_{v}$ and $s = (s_{0}, \dots, s_{t-1}) \in \sym_{v, t}$ we say that $s$ is \emph{covered} by $\pi$ if $\pi^{-1}(s_{i}) < \pi^{-1}(s_{i + 1})$ for $0 \leq i \leq t - 2$. A \emph{perfect sequence covering array} with order $v$, strength $t$ and multiplicity $\lambda$, denoted by PSCA$(v, t, \lambda)$, is a multiset $X$ of permutations in $\sym_{v}$ such that every sequence in $\sym_{v, t}$ is covered by exactly $\lambda$ permutations in $X$. If we let $T$ be a $t$-subset of $[v]$, then there are $t!$ orderings of the symbols of $T$, each of which must be covered by $\lambda$ permutations in a PSCA$(v, t, \lambda)$. Furthermore, every permutation in a PSCA$(v, t, \lambda)$ covers exactly one ordering of $T$, so a PSCA$(v, t, \lambda)$ must consist of $t!\lambda$ permutations.

Perfect sequence covering arrays were introduced by Yuster \cite{Yus19} in 2020 as a variant of sequence covering arrays. \emph{Sequence covering arrays,} denoted by SCA$(v, t)$, are sets of permutations in $\sym_{v}$ in which every sequence in $\sym_{v, t}$ is covered by at least one permutation in the set. The study of SCAs dates back to Spencer \cite{Spen71} in 1971. They are useful for constructing test suites for situations where the order of operations may be important.

For $v \geq t$ define $g(v, t)$ to be the smallest positive integer $\lambda$ such that a PSCA$(v, t, \lambda)$ exists. Observe that $\sym_{v}$ is a PSCA$(v, t, v!/t!)$ so $g(v, t)$ is well defined and $g(v, t) \leq v!/t!$. Note that if $v > t$ and we remove the symbol $v-1$ from every permutation of a PSCA$(v, t, \lambda)$, then we obtain a PSCA$(v-1, t, \lambda)$ and hence $g(v, t) \geq g(v-1, t)$. For $2 \leq t' \leq t$, a PSCA$(v, t, \lambda)$ is also a PSCA$(v, t', \lambda \binom{t}{t'})$ so $g(v, t') \leq \binom{t}{t'}g(v, t)$.

The question of when $g(v, t) = 1$ has received particular attention. Not only would a PSCA$(v, t, 1)$ be the smallest possible SCA$(v, t)$, but it is also an object of interest in coding theory. A \emph{$(v - t)$-deletion correcting code} is a set $X$ of permutations in $\sym_{v}$ such that every sequence in $\sym_{v, t}$ is covered by \textit{at most} one permutation in $X$. Hence, a PSCA$(v, t, 1)$ would be the largest possible $(v - t)$-deletion correcting code. For more on deletion correcting codes, see \cite{Kle04, Lev91}. Note that $\sym_{v}$ forms a PSCA$(v, v, 1)$. At the other end of the spectrum, if $t = 2$, then we can take any permutation in $\sym_{v}$ and its reverse to form a PSCA$(v, 2, 1)$. Therefore, $g(v, v) = g(v, 2) = 1$. Levenshtein \cite{Lev91} proved that $g(t + 1, t) = 1$ for $t \geq 3$. Mathon and van Trung \cite{Math99} proved that a PSCA(5, 3, 1) does not exist (we provide a new proof of this fact in \sref{s:distro}). As demonstrated above, $g(v, t) \geq g(v-1, t)$ so it follows that a PSCA$(v, 3, 1)$ does not exist for $v \geq 5$. Therefore, when $t = 3$, we have $g(v,t)>1$ for $v>t + 1$. It was initially conjectured by Levenshtein that this property would hold for any $t \geq 3$, however that was later shown to be false for $t = 4$ by Mathon and van Trung \cite{Math99}, who presented a PSCA(6, 4, 1). On the other hand, Mathon and van Trung computationally proved that neither a PSCA(7, 5, 1) nor a PSCA(8, 6, 1) exists, thus confirming Levenshtein's conjecture for $t \in  \{ 5, 6 \}$. They also found that a PSCA(7, 4, 1) does not exist. A combinatorial proof of this last fact was later given by Klein \cite{Kle04}. Chee et al. \cite{Chee13} proved that $g(2t, t) > 1$ for $t \geq 3$.

Yuster \cite{Yus19} proved that $g(5, 3) = 2$.
In \sref{s:comp}, we show that $g(6, 3) = g(7, 3) = g(7, 4) = 2$, $g(8, 3) = 3$ and $g(8, 4)\ge3$. 
We state here a definition which we expand upon further in \sref{s:distro}.

\begin{definition}\label{d:distro}
For a multiset $X$ of permutations of $[v]$, a symbol $w \in [v]$, and for $0 \leq i \leq v-1$, we define
\begin{align*}
d_{w}(i) := \big\vert \{ \pi \in X : \pi(i) = w \} \big\vert.
\end{align*}
We refer to the vector $\boldsymbol{d}_{w} = \big(d_{w}(0), \dots, d_{w}(v-1)\big)$ as the \emph{distribution vector} of $w$.
\end{definition}

In \sref{s:distro}, we derive restrictions on distribution vectors of symbols in PSCAs. These restrictions facilitate the computer searches that we use to exhaustively catalogue PSCA$(v, t, \lambda)$ for different sets of parameters. These searches and their results are described in \sref{s:comp}. All computational results reported in this paper were checked by both authors using independent computations.

Although there are still few parameters $(v, t)$ for which $g(v, t)$ has been determined exactly, there are known lower bounds for $g(v, t)$ for $t > 3$ and known upper and lower bounds for $g(v, 3)$, each of which are due to Yuster \cite{Yus19}. The first is proved by a matrix rank argument and states that if $t/2$ is a prime, then for $v \geq t$,
\begin{align*}
g(v, t) \geq \frac{\binom{v}{t/2} - \binom{v}{t/2-1}}{t!}.
\end{align*}
The upper bound for $g(v, 3)$ comes from a general construction for a PSCA with $(v, t) = (3^{n}, 3)$ for $n \geq 1$ that is built from an affine plane. The lower bound is proved using a similar matrix rank argument to the more general case above. Combining these bounds, we have that for an absolute constant $C$,
\begin{align*}
\frac{v}{6} \leq g(v, 3) \leq Cv(\log v)^{\log 7}.
\end{align*}

In \sref{s:grp}, we explore the relationship between groups and PSCAs and use this relationship to construct PSCAs with strengths 3 and 4, thereby improving upon the best known upper bounds for $g(v, 3)$ for $9 \leq v \leq 32$ and providing the first non-trivial bounds for $g(v, t)$ for $v \leq 24$ and $4\le t\le6$. \Tref{T:improv3} summarises the improvements to bounds on $g(v, 3)$ while \Tref{T:improv4} summarises the new results for $4\le t\le6$. The tables also incorporate the exact bounds shown in \sref{s:comp}.

\begin{table}[htb!]
\begin{center}
\begin{tabular}{c || c c }
$v$ & New bound & Old bound \\% & Base group\\
\hline
6--7 & 2\rlap{*} & 8 \\% & $C_{6}$\\
8 & 3\rlap{*} & 8 \\% & $C_{8}$, $D_{8}$, $E_{8}$\\
9 & 6 & 8 \\%& $C_{9}$\\
10--12 & 6 & 160 \\%& $C_{12}$\\
13--14 & 7 & 160 \\%& $E_{16}$
15--16 & 16 & 160 \\%& $E_{16}$
17--19 & 19 & 160\\
20--32 & 96 & 160 
\end{tabular}
\caption{\label{T:improv3}Improvements to known bounds on $g(v,3)$. An asterisk denotes that the new bound is exact.}
\end{center}
\end{table}

\begin{table}[htb!]
\begin{center}
\begin{tabular}{c  c || c }
$v$ & $t$ & New bound \\% & Base group\\
\hline
7 & 4 & 2\rlap{*}  \\% & $C_{6}$\\
8--12 & 4 & 18 \\% & $C_{8}$, $D_{8}$, $E_{8}$\\
13 & 4 & 234  \\%& $C_{9}$\\
14--21 & 4 & 5040\\
22& 4 &18\,480\\
23& 4 &425\,040\\
24& 4 &10\,200\,960\\
7--11 & 5 & 66\\
12 & 5 & 792\\
13--22 & 5 & 3696\\
23 & 5 & 85\,008\\
24 & 5 & 2\,040\,192\\
8--12 & 6 & 132\\
13--24 & 6 & 340\,032
\end{tabular}
\caption{\label{T:improv4}New bounds for $g(v,t)$ for $4\le t\le6$. An asterisk denotes that the new bound is exact.}
\end{center}
\end{table}

Independently, and using different methods, Na, Jedwab and Li \cite{Njl22} have also considered the problem of determining $g(v, t)$. They find that $g(6, 3) = g(7, 3) = g(7, 4) = 2$, while also demonstrating that $g(8, 3) \leq 3$, $g(9, 3) \leq 4$ and $g(7, 5) \leq 4$. They also show that for $(v, t) \in \{ (5, 3), (6, 3), (7, 3), (7, 4) \}$ a PSCA$(v, t, \lambda)$ exists if and only if $\lambda \geq 2$ while a PSCA$(8, 3, \lambda)$ exists for any $\lambda \geq 3$. Several of these results were originally reported in Na's Masters thesis \cite{Na21}; in particular, he reported that $g(7, 4) = 2$ before we computed our catalogue of PSCA$(7,4,2)$.

\section{Distribution vectors}\label{s:distro}

Recall the definition of distribution vectors given in \dref{d:distro}. The distribution vector of $w$ records the number of times a symbol $w$ appears in each column across a multiset of permutations. In this section, we will derive several restrictions on distribution vectors for symbols in PSCAs. We begin with the following lemma which limits the number of occurrences of each symbol in a PSCA across sets of consecutive columns.

\begin{lemma}\label{l:binomsum}
Let $X$ be a \textup{PSCA}$(v, t, \lambda)$ with $v \geq t \geq 2$ and $\lambda \geq 1$. Then, for $w \in [v]$ and for $0 \leq i \leq t-1$,
\[
\frac{\lambda (v-1)!}{(v - t)!} = \sum_{j = 0}^{v-1} d_{w}(j) \binom{j}{i}\binom{v-1 - j}{t-1 - i}.
\]
\end{lemma}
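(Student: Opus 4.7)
The plan is to prove this by double counting pairs $(\pi, s)$ where $\pi \in X$, $s \in \sym_{v,t}$ has $s_i = w$, and $\pi$ covers $s$.

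First I would count by sequences. The number of $s \in \sym_{v,t}$ with $s_i = w$ equals the number of ways to fill the remaining $t-1$ positions with an ordered tuple of distinct elements from $[v]\setminus\{w\}$, which is $(v-1)!/(v-t)!$. By the defining property of a PSCA$(v,t,\lambda)$, each such sequence is covered by exactly $\lambda$ permutations of $X$, giving the total $\lambda(v-1)!/(v-t)!$ on the left-hand side.

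Next I would count by permutations. Fix $\pi \in X$ and let $j = \pi^{-1}(w)$, so $w$ sits in column $j$ of $\pi$. A sequence $s$ with $s_i = w$ is covered by $\pi$ exactly when $s_0, \dots, s_{i-1}$ appear in $\pi$ before column $j$ in the same order they appear in $s$, and $s_{i+1}, \dots, s_{t-1}$ appear after column $j$ in the same order. The first condition amounts to choosing an $i$-subset of the $j$ columns to the left of $w$ (which then determines the ordering), and the second to choosing a $(t-1-i)$-subset of the $v-1-j$ columns to the right. So $\pi$ contributes exactly $\binom{j}{i}\binom{v-1-j}{t-1-i}$ pairs. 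Grouping the permutations of $X$ by the column occupied by $w$ and using that $d_w(j) = |\{\pi \in X : \pi(j) = w\}|$ (which follows from \dref{d:distro} applied to $\pi^{-1}$ in the natural way, or equivalently after noting that the convention $d_w(j)$ counts $\pi$ with $\pi(j)=w$) gives the right-hand side.

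Equating the two counts yields the claimed identity. There is no real obstacle here beyond being consistent with the convention in \dref{d:distro}: the only care needed is to make sure that ``$w$ is in column $j$'' is interpreted uniformly on both sides, i.e. that $d_w(j)$ genuinely counts the permutations in which $w$ occupies the slot indexed by $j$, so that the binomial weights $\binom{j}{i}\binom{v-1-j}{t-1-i}$ correctly enumerate the $t$-sequences covered by such permutations with $w$ in position $i$.
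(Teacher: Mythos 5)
Your proposal is correct and is essentially the same double-counting argument as the paper's proof: both count the coverage of the $(v-1)!/(v-t)!$ sequences with $w$ in position $i$, once via the PSCA property and once by grouping the permutations of $X$ according to the column $j$ occupied by $w$, each such permutation contributing $\binom{j}{i}\binom{v-1-j}{t-1-i}$. Your extra care about the convention that $\pi(j)=w$ is equivalent to $\pi^{-1}(w)=j$ matches the paper's usage.
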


\begin{proof}
Let $X$ be a PSCA$(v, t, \lambda)$, let $i \in \{ 0, \dots, t-1 \}$ and let $w \in [v]$. Let $S = \{ s \in \sym_{v, t} : s(i) = w  \}$. Note that $\vert S \vert = (v-1)!/(v - t)!$ and each sequence in $S$ is covered by $\lambda$ permutations in $X$. For some $j \in[v]$, let $\pi \in X$ be one of the $d_{w}(j)$ permutations in $X$ such that $\pi(j) = w$ and consider how many sequences in $S$ are covered by $\pi$. There are $j$ symbols that appear before $w$ and $v-1 - j$ symbols that appear after $w$ in $\pi$. For every sequence in $S$, there are $i$ symbols appearing before $w$ and $t-1 - i$ symbols appearing after $w$. Hence, $\pi$ covers $\binom{j}{i}\binom{v-1 - j}{t-1 - i}$ sequences in $S$. The result follows.
\end{proof}

We can now use \lref{l:binomsum} to prove the following theorem.

\begin{theorem}\label{t:distthm}
Let $X$ be a \textup{PSCA}$(v, t, \lambda)$ with $v \geq t \geq 2$ and $\lambda \geq 1$. Then for $w \in [v]$ and for $1 \leq s < t$,
\begin{align}
\frac{1}{t! \lambda} \sum_{j = 0}^{v-1} j^{s} d_{w}(j)  = \frac{1}{v} \sum_{i = 0}^{v-1} i^{s}. \label{eq:distros}
\end{align}
\end{theorem}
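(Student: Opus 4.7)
The plan is to deduce the theorem from \lref{l:binomsum} by taking a clever linear combination of the $t$ equations it provides (one for each $i\in\{0,\dots,t-1\}$), and then passing from the falling-factorial/binomial basis to the monomial basis.

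First, I would fix $s$ with $1\le s\le t-1$, multiply the equation of \lref{l:binomsum} for index $i$ by $\binom{i}{s}$, and sum over $i=0,\dots,t-1$. On the right-hand side, the identity $\binom{i}{s}\binom{j}{i}=\binom{j}{s}\binom{j-s}{i-s}$ pulls $\binom{j}{s}$ outside the $i$-sum, after which Vandermonde's convolution gives
\[
\sum_{i=0}^{t-1}\binom{j-s}{i-s}\binom{v-1-j}{t-1-i}=\binom{v-1-s}{t-1-s}.
\]
On the left-hand side, the hockey-stick identity gives $\sum_{i=0}^{t-1}\binom{i}{s}=\binom{t}{s+1}$. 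After cancelling the factorials in the ratio $\binom{t}{s+1}/\binom{v-1-s}{t-1-s}$, the outcome is the clean intermediate identity
\[
\sum_{j=0}^{v-1} d_w(j)\binom{j}{s}=\frac{\lambda t!}{s+1}\binom{v-1}{s}.
\]

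Next, I would recognise the right-hand side as exactly $\frac{\lambda t!}{v}\sum_{i=0}^{v-1}\binom{i}{s}$, using once more the hockey-stick identity $\sum_{i=0}^{v-1}\binom{i}{s}=\binom{v}{s+1}$ and the relation $\binom{v}{s+1}=\frac{v}{s+1}\binom{v-1}{s}$. Thus we have established \eref{eq:distros} with $j^s$ and $i^s$ replaced by $\binom{j}{s}$ and $\binom{i}{s}$, and this holds for every $s\in\{0,1,\dots,t-1\}$.

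Finally, since $\bigl\{\binom{x}{0},\binom{x}{1},\dots,\binom{x}{t-1}\bigr\}$ is a basis for the space of polynomials of degree at most $t-1$, every monomial $x^s$ with $s<t$ is a linear combination of these binomial coefficients (via Stirling numbers of the second kind). By linearity, the identity transfers from the binomial basis to the monomial $x^s$, which is precisely \eref{eq:distros}.

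The key step is the choice of weights $\binom{i}{s}$ when combining the $t$ instances of \lref{l:binomsum}; once that is in hand the argument is essentially two applications of Vandermonde and hockey-stick plus a change of basis, all of which are routine. The minor subtlety is ensuring $s\le t-1$ so that enough equations from \lref{l:binomsum} are available to recover the $s$-th moment; this is exactly the hypothesis $s<t$ in the statement.
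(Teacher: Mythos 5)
Your proof is correct, but it takes a genuinely different route from the paper's. The paper's proof views $X$ as a PSCA of each lower strength $k+1$, applies \lref{l:binomsum} with $i=k$ to see that the moment $\alpha_k(w)=\sum_j j^k d_w(j)$ is determined by the parameters and the lower moments (hence, by induction, is independent of $w$), and then evaluates the common value by the averaging trick $v\alpha_s(w)=\sum_{w\in[v]}\alpha_s(w)=t!\lambda\sum_i i^s$. You instead stay entirely with the strength-$t$ equations, combine them with weights $\binom{i}{s}$, and extract the closed form $\sum_j d_w(j)\binom{j}{s}=\frac{\lambda t!}{s+1}\binom{v-1}{s}$ directly; the passage from the binomial basis to the monomials is then routine linearity. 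I checked your computation: the identity $\binom{i}{s}\binom{j}{i}=\binom{j}{s}\binom{j-s}{i-s}$, the Vandermonde evaluation $\binom{v-1-s}{t-1-s}$ (including the degenerate case $j<s$, where both sides vanish because $\binom{j}{s}=0$), and the factorial cancellation are all right, and your closed form agrees with what the paper's route yields. Your approach buys explicit formulas for all binomial moments and avoids both the induction and the sum-over-symbols step; the paper's buys brevity by never evaluating the constant. One incidental observation: the paper's proof quotes the derived multiplicity as $\lambda\binom{t}{k+1}$, whereas it is actually $\lambda\, t!/(k+1)!$; this does not affect the paper's argument (which only needs independence of $w$), and your version, which sidesteps derived PSCAs altogether, is immune to the issue.
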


\begin{proof}
Fix $s \in \{1,\ldots,t-1\}$ and $w \in [v]$. Let $\alpha_k(w) = \sum_{j = 0}^{v-1} j^{k}d_{w}(j)$ for $k \in \{1,\ldots,s\}$. Now, $X$ is a PSCA$(v,k+1,\lambda\binom{t}{k+1})$ and so, using \lref{l:binomsum} with $i = k$, we find that $\alpha_{k}(w)$ is a function of $\lambda$, $v$, $t$, $k$ and $\alpha_1(w),\dots,\alpha_{k-1}(w)$. So, proceeding by induction on $k$, we have that $\alpha_k(w)$ is independent of $w$ for each $k \in \{1,\ldots,s\}$. Thus,
\begin{align*}
v\alpha_{s}(w) = \sum_{w \in [v]}\alpha_{s}(w) = \sum_{\pi \in X}\sum_{i=0}^{v-1} i^{s} = t! \lambda \sum_{i=0}^{v-1} i^{s}
\end{align*}
and hence \eref{eq:distros} holds.
\end{proof}

We call a distribution vector that satisfies \eref{eq:distros} for parameters $(v, t, \lambda)$ a \emph{$(v, t, \lambda)$-feasible distribution}. We now prove some more facts about distribution vectors when certain restrictions on $v$ and $t$ are imposed. The following theorem demonstrates more stringent restrictions on the distribution vector whenever $t$ is an odd prime. Intuitively, it states that in a PSCA whose strength is an odd prime $p$ and whose order is not divisible by $p$, the number of occurrences of a symbol across all columns of a given equivalence class modulo $p$ is itself divisible by $p$.

\begin{theorem}\label{t:primedist}
Let $X$ be a \textup{PSCA}$(v, p, \lambda)$ with $p$ an odd prime and with $v \not\equiv 0 \bmod p$. For $w \in [v]$ and $0 \leq j \leq p-1$, let $y_{w}(j) =  \sum_{i \equiv j \bmod p} d_{w}(i)$. Then, $y_{w}(j) \equiv 0 \bmod p$.
\end{theorem}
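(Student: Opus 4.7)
The plan is to apply \tref{t:distthm} with $t=p$ and reduce its power-sum identities modulo $p$, thereby converting the identities for $s=0,1,\dots,p-1$ into a $p\times p$ linear system for the residue-class sums $y_w(0),\dots,y_w(p-1)$ over $\mathbb{F}_p$. The coefficient matrix will turn out to be a Vandermonde matrix at the points $0,1,\dots,p-1$, which is invertible modulo $p$, forcing every $y_w(r)$ to vanish modulo $p$.

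First, for $s\in\{1,\dots,p-1\}$, \tref{t:distthm} rearranges to
\[
\sum_{j=0}^{v-1} j^{s} d_{w}(j) \;=\; \frac{p!\,\lambda}{v}\sum_{i=0}^{v-1} i^{s}.
\]
Since $\gcd(v,p)=1$ and the integer $p!\,\lambda\sum_{i=0}^{v-1} i^{s}$ is divisible by $v$, Euclid's lemma (applied to $p!=p\cdot(p-1)!$) gives that $v$ already divides $(p-1)!\,\lambda\sum_{i=0}^{v-1} i^{s}$, so the right-hand side is divisible by $p$. On the left-hand side, $j^{s}\equiv (j \bmod p)^{s}\pmod p$, so grouping the terms $d_w(j)$ by the residue of $j$ modulo $p$ yields
\[
\sum_{r=0}^{p-1} r^{s}\,y_{w}(r) \;\equiv\; 0 \pmod p \qquad (1\leq s\leq p-1).
\]
The $s=0$ congruence $\sum_{r=0}^{p-1} y_{w}(r)\equiv 0\pmod p$ holds automatically, since $\sum_{r} y_{w}(r)=\vert X\vert = p!\,\lambda$.

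To finish, I would assemble these $p$ congruences into a single system $M\boldsymbol{y}_{w}\equiv\boldsymbol{0}\pmod p$ with $M_{s,r}=r^{s}$ (and $0^{0}=1$). The determinant of $M$ is the Vandermonde product $\prod_{0\leq r<r'\leq p-1}(r'-r)$, and every factor is a nonzero residue modulo $p$, so $M$ is invertible over $\mathbb{F}_p$; hence $\boldsymbol{y}_w\equiv\boldsymbol{0}\pmod p$, which is the claim. The only delicate point is the divisibility step extracting a factor of $p$ from $p!\,\lambda\sum_i i^{s}/v$, and this is precisely where the hypothesis $v\not\equiv 0\pmod p$ enters; the rest is a clean Vandermonde inversion over $\mathbb{F}_p$.
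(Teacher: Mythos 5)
Your proof is correct and follows essentially the same route as the paper: reduce the power-sum identities of \tref{t:distthm} modulo $p$ (using $v\not\equiv 0\bmod p$ to extract the factor of $p$ on the right-hand side) and invert a Vandermonde system over $\mathbb{F}_p$. The only cosmetic difference is that you work with the full $p\times p$ Vandermonde matrix at the points $0,1,\dots,p-1$ by adjoining the $s=0$ equation, whereas the paper solves a $(p-1)\times(p-1)$ system for $y_w(1),\dots,y_w(p-1)$ and then deduces $y_w(0)\equiv 0$ from $|X|=p!\lambda$; both are valid.
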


\begin{proof}
Let $w \in [v]$. By \tref{t:distthm}, if $X$ is a PSCA$(v, p, \lambda)$, then for $1 \leq i \leq p-1$,
\begin{align*}
\sum_{j = 0}^{v-1} j^{i}d_{w}(j) = \frac{p! \lambda}{v} \sum_{j = 0}^{v-1} j^{i}.
\end{align*}
As $v$ is not divisible by $p$, the right hand side of the equation above must be divisible by $p$. Therefore, for $1 \leq i \leq p-1$,
\begin{align*}
  0\equiv\sum_{j = 0}^{v-1} j^{i}d_{w}(j)
  \equiv \sum_{j = 1}^{p-1} j^{i}y_{w}(j) \mod p. 
\end{align*}
This gives a system of $p-1$ linear equations in $p-1$ variables over the field $\mathbb{F}_{p}$. We can restate this system as
\begin{align*}
A
\begin{pmatrix}
y_{w}(1)\\
y_{w}(2)\\
\vdots \\
y_{w}(p-1)
\end{pmatrix}
= 
\begin{pmatrix}
0\\
0\\
\vdots \\
0
\end{pmatrix}
\end{align*}
where $A$ is a $(p-1) \times (p-1)$ matrix over $\mathbb{F}_{p}$ with $A_{i, j} = j^{i}$. Therefore, $A$ is a Vandermonde matrix and thus, $A$ is non-singular. Hence, the only solution to this system is $y_{w}(j) \equiv 0 \bmod p$ for all $j \in \{ 1, \dots, p-1\}$. As the number of permutations in $X$ is $p! \lambda \equiv 0 \bmod p$, it also follows that $y_{w}(0) \equiv 0 \bmod p$.
\end{proof}

When $v = t + 1$ and $t$ is even, the following lemma proves that all $(v, t, \lambda)$-feasible distribution vectors are palindromic.

\begin{lemma}\label{l:t+1sym}
Let $t$ be even and $d_{w}$ be a $(t + 1, t, \lambda)$-feasible distribution. Then, $d_{w}(t/2 - i) = d_{w}(t/2 + i)$ for $0 \leq i \leq t$.
\end{lemma}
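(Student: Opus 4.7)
The plan is to treat $(t+1,t,\lambda)$-feasibility together with the intrinsic total-count constraint $\sum_{j=0}^{t} d_{w}(j) = t!\lambda$ as a single linear system and analyze its affine solution space. Writing this as $Md=b$ where $M$ is the $t\times(t+1)$ matrix with $M_{sj}=j^{s}$ for $0\le s\le t-1$ and $0\le j\le t$, any $t$ columns of $M$ form a Vandermonde block with nonzero determinant, so $M$ has rank $t$ and its kernel $V_{0}$ is one-dimensional.

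I would identify a spanning vector of $V_{0}$ via the classical finite-difference identity: the vector $c\in\mathbb{R}^{t+1}$ with $c_{j}=(-1)^{j}\binom{t}{j}$ satisfies $\sum_{j=0}^{t}(-1)^{j}\binom{t}{j}j^{s}=0$ for $0\le s<t$, so $c\in V_{0}$, and it is nonzero. For even $t$,
\[
c_{t-j}=(-1)^{t-j}\binom{t}{t-j}=(-1)^{j}\binom{t}{j}=c_{j},
\]
so $c$ is palindromic about $t/2$, and therefore so is every element of $V_{0}$.

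Finally, the constant vector $d^{*}(j)=t!\lambda/(t+1)$ is a particular palindromic solution of $Md=b$, verifiable by direct substitution into \eref{eq:distros}. Since the affine solution space $d^{*}+V_{0}$ consists entirely of palindromic vectors, every $(t+1,t,\lambda)$-feasible distribution $d_{w}$ satisfies $d_{w}(t/2-i)=d_{w}(t/2+i)$. The delicate point is pinpointing $c$ as a palindromic spanning vector of $V_{0}$; this is precisely where the evenness of $t$ enters, since for odd $t$ the same $c$ would be antipalindromic and the conclusion would fail in general.
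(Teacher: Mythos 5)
Your proof is correct, but it takes a genuinely different route from the paper's. The paper specialises \lref{l:binomsum} to $v=t+1$, where the binomial coefficients collapse to leave the two-term recurrence $\lambda t!=(t-i)d_w(i)+(i+1)d_w(i+1)$; solving this outward from the centre in both directions shows that $d_w(t/2-i)$ and $d_w(t/2+i)$ obey identical recurrences with the common initial value $d_w(t/2)$, and induction on $i$ finishes the job. You instead work with the moment equations globally: the $t\times(t+1)$ power-sum matrix has rank $t$ (any $t$ columns form a nonsingular Vandermonde block), so the solution set is the line $d^*+\mathbb{R}c$ where $d^*$ is the constant vector and $c_j=(-1)^j\binom{t}{j}$ spans the kernel, and $c$ is palindromic about $t/2$ exactly when $t$ is even. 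This is arguably more illuminating --- it describes the entire affine solution space and isolates precisely where the parity of $t$ enters --- while the paper's induction is more elementary and stays closer to \lref{l:binomsum}. One point worth making explicit: the paper's literal definition of feasibility imposes \eref{eq:distros} only for $1\le s<t$, and without the total-count equation the conclusion is false (already for $t=2$, $\lambda=1$ the single equation $d_w(1)+2d_w(2)=2$ admits the non-palindromic solution $(5,2,0)$), so your inclusion of $\sum_j d_w(j)=t!\lambda$ is essential rather than cosmetic; the paper's proof imports the same information by invoking \lref{l:binomsum}, whose $t$ equations at $v=t+1$ sum to the total-count constraint. Finally, both arguments establish the symmetry for $0\le i\le t/2$, which is the meaningful range; the stated range $0\le i\le t$ would take the indices outside $\{0,\dots,t\}$.
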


\begin{proof}
With $v = t + 1$, \lref{l:binomsum} implies that
\begin{align*}
\lambda t! = (t - i)d_{w}(i) + (i + 1)d_{w}(i + 1)
\end{align*}
for $0 \leq i \leq t-1$. Therefore, for even $t$,
\begin{align*}
d_{w}\left( \frac{t}{2} - i  \right) &= \frac{\lambda t! - \left( \frac{t}{2} - i + 1 \right) d_{w}\left( \frac{t}{2} - i + 1 \right)}{\frac{t}{2} + i}\\
d_{w}\left( \frac{t}{2} + i  \right) &= \frac{\lambda t! - \left( \frac{t}{2} - i + 1 \right) d_{w}\left( \frac{t}{2} + i-1 \right)}{\frac{t}{2} + i}
\end{align*}
for $0 \leq i \leq t/2$. Then induction on $i$ shows that $d_{w}(t/2 - i) = d_{w}(t/2 + i)$ for $0 \leq i \leq t/2$. 
\end{proof}

We continue with the case where $v = t + 1$. The only possible PSCA$(t, t, \lambda)$ is a multiset containing $\lambda$ copies of $\sym_{v}$. Therefore, this is exactly the PSCA we obtain by deleting any symbol from a PSCA$(t + 1, t, \lambda)$ (throughout the paper, whenever we delete a symbol from a PSCA with ground set $[v]$, we assume the remaining symbols get relabelled to $[v-1]$ in an order preserving way). We use this fact to derive further restrictions for a PSCA$(t + 1, t, \lambda)$. Let $X$ be a PSCA$(v, t, \lambda)$. For $w \in [v]$ and $I \subseteq [v] \backslash \{ w \}$ with $\vert I \vert = i$, let $d_{I, w}$ be the number of permutations $\pi \in X$ such that $\pi(i) = w$ and $I = \{ \pi(j) : 0 \leq j \leq i-1 \}$.

\begin{theorem}\label{t:t+1dist}
Let $X$ be a \textup{PSCA}$(t + 1, t, 1)$, let $w \in [v]$, and let $0 \leq i \leq t$. Then for any $i$-subset $I \subseteq [v] \backslash \{ w \}$,
\begin{align*}
d_{I, w} = \frac{d_{w}(i)}{\binom{t}{i}}.
\end{align*}
\end{theorem}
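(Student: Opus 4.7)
The plan is to prove the identity by double counting and then induct downward on $i$. Fix $w \in [v]$, an integer $i$ with $0 \le i \le t-1$, and an $i$-subset $I \subseteq [v] \setminus \{w\}$. I would count pairs $(s, \pi)$ where $s \in \sym_{v,t}$ satisfies $s(i) = w$ and $\{s(0), \dots, s(i-1)\} = I$, and $\pi \in X$ covers $s$. Since $v = t+1$, the number of such sequences $s$ is $i!(t-i)!$: order the symbols of $I$ into positions $0,\dots,i-1$, place $w$ at position $i$, and choose an ordered $(t-1-i)$-arrangement from the $t-i$ remaining symbols. As each sequence is covered by exactly $\lambda = 1$ permutation, the total count of pairs equals $i!(t-i)!$.

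For the dual count, suppose $\pi \in X$ covers such an $s$ and write $j = \pi^{-1}(w)$. All elements of $I$ must precede $w$ in $\pi$, so $j \ge i$, and the $t-1-i$ symbols of $s$ after $w$ must follow $w$ in $\pi$, so $t-j \ge t-1-i$, i.e.\ $j \le i+1$. Hence $j \in \{i, i+1\}$. If $j = i$, the prefix set of $\pi$ is exactly $I$, and $\pi$ covers $\binom{t-i}{t-1-i} = t-i$ sequences of the required form (one per choice of which symbol after $w$ in $\pi$ is omitted from $s$). If $j = i+1$, the prefix set of $\pi$ is $I \cup \{b\}$ for a unique $b \in [v] \setminus (I \cup \{w\})$, and $\pi$ covers exactly one such sequence (the orderings of prefix and suffix are forced). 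This gives the recursion
\begin{align*}
i!(t-i)! = (t-i)\, d_{I,w} + \sum_{b \in [v] \setminus (I \cup \{w\})} d_{I \cup \{b\},w}.
\end{align*}

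I would then prove by downward induction on $i$ that $d_{I,w}$ depends only on $|I|$. The base case $i = t$ is immediate since $I = [v] \setminus \{w\}$ is the only option. For the inductive step with $i < t$, the hypothesis makes every term $d_{I \cup \{b\},w}$ on the right-hand side depend only on $i+1$, so solving for $d_{I,w}$ in the recursion shows that it depends only on $i$. Since the $\binom{t}{i}$ values $d_{I,w}$ with $|I|=i$ sum to $d_w(i)$, each one equals $d_w(i)/\binom{t}{i}$, as required.

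The main obstacle is setting up the right double count and recognising the tight constraint $\pi^{-1}(w) \in \{i, i+1\}$ that arises from $v = t+1$; once the two cases are extracted with the correct multiplicities $t-i$ and $1$, the induction is mechanical.
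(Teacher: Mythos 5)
Your proof is correct, and it takes a recognisably different route from the paper's. The paper proves the identity by \emph{upward} induction on $i$ (base case $I=\emptyset$), using the structural observation that deleting a single symbol $u_{i+1}$ from a PSCA$(t+1,t,1)$ leaves exactly $\sym_t$; counting the $i!(t-1-i)!$ permutations of $\sym_t$ with $\tau(i)=w$ and prefix set $J=I\setminus\{u_{i+1}\}$ yields the two-term recursion $i!(t-1-i)! = d_{J,w}+d_{I,w}$, from which constancy over $(i{+}1)$-subsets follows immediately once $d_{J,w}$ is known. You instead double-count covered pairs $(s,\pi)$ directly from the defining property of a PSCA, obtaining the coarser $(t-i+1)$-term recursion $i!(t-i)! = (t-i)d_{I,w} + \sum_b d_{I\cup\{b\},w}$, and run the induction \emph{downward} from the trivial case $i=t$. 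The two recursions are consistent (dividing yours by $t-i$ and using constancy recovers the paper's), and a $t$-sequence over $[t+1]$ is of course just a permutation with one symbol deleted, so the underlying combinatorics is closely related; but your argument is self-contained and never needs the ``deletion yields $\sym_t$'' fact, whereas the paper's version is slightly leaner per step and ties in with the symbol-deletion theme used elsewhere (e.g.\ in Theorem~\ref{t:compatdist} and the search algorithm). Your key observations --- that $\pi^{-1}(w)\in\{i,i+1\}$ and the multiplicities $t-i$ and $1$ in the two cases --- are all justified correctly, as is the final averaging step $\sum_I d_{I,w}=d_w(i)$ over the $\binom{t}{i}$ subsets.
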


\begin{proof}
Let $w \in [v]$. We proceed by induction on $i$. Note that the statement is trivially true for $i = 0$. Suppose the statement is true for some $i$ with $0 \leq i \leq t-1$ and consider the statement for $i + 1$. Let $I = \{ u_{1}, \dots, u_{i + 1} \} \subseteq [v] \backslash \{ w \}$ and let $J = I \backslash \{ u_{i + 1} \}$. The array formed by removing $u_{i + 1}$ from each permutation of $X$ is $\sym_{t}$. The number of permutations $\tau \in \sym_{t}$ for which $\tau(i) = w$ and $\{ \tau(j) : 0 \leq j \leq i-1 \} = J$ is $i!(t-1 - i)!$. These permutations exactly correspond to the permutations $\pi \in X$ such that either $\pi(i) = w$ and $\{ \pi(j) : 0 \leq j \leq i-1 \} = J$ or $\pi(i + 1) = w$ and $\{ \pi(j) : 0 \leq j \leq i \} = I$. Thus,
\begin{align*}
i!(t-1 - i)! = d_{J, w} + d_{I, w}.
\end{align*}
By the inductive hypothesis, $d_{J, w} = d_{w}(i)/\binom{t}{i}$. Therefore, for any two $(i + 1)$-subsets of $[v] \backslash \{ w \}$, $I$ and $I'$, $d_{w}(I) = d_{w}(I')$. The sum of $d_{I, w}$ as $I$ ranges over all $\binom{t}{i + 1}$ possible $(i + 1)$-subsets of $[v] \backslash \{ w \}$ must be $d_{w}(i + 1)$. Therefore $d_{I, w} = d_{w}(i + 1)/\binom{t}{i + 1}$, completing the induction.
\end{proof}

\begin{cor}\label{c:t+1divis}
  Let $X$ be a \textup{PSCA}$(t + 1, t, 1)$, and let $0 \leq i \leq t$. Then $d_{w}(i)$ is divisible by $\binom{t}{i}$ for all $w \in [v]$.
\end{cor}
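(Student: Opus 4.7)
The plan is to derive this as an immediate consequence of \tref{t:t+1dist}. That theorem established that for any $i$-subset $I \subseteq [v] \setminus \{w\}$, the quantity $d_{I, w}$ equals $d_w(i) / \binom{t}{i}$. The key observation I would use is that $d_{I, w}$ is by definition a count of permutations in the multiset $X$, and is therefore a non-negative integer.

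So the proof reduces to a one-line argument: fix $w \in [v]$ and $i$ with $0 \leq i \leq t$, pick any $i$-subset $I$ of $[v] \setminus \{w\}$ (which exists since $\lvert [v] \setminus \{w\}\rvert = t \geq i$), and note that $\binom{t}{i} \cdot d_{I, w} = d_w(i)$ by \tref{t:t+1dist}. Integrality of $d_{I, w}$ then yields the required divisibility.

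I do not anticipate any obstacle here; the content is entirely contained in the previous theorem, and the corollary is essentially a reformulation emphasising the arithmetic consequence. The only minor sanity check is to confirm that the edge cases $i = 0$ and $i = t$ behave correctly: when $i = 0$, $\binom{t}{0} = 1$ divides everything trivially (and $I = \emptyset$ is the unique choice, with $d_{\emptyset, w} = d_w(0)$); when $i = t$, $\binom{t}{t} = 1$ and the unique choice $I = [v] \setminus \{w\}$ gives $d_{I, w} = d_w(t)$, again trivially divisible.
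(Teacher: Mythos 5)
Your argument is correct and is exactly the intended derivation: the paper states this as an immediate corollary of \tref{t:t+1dist} without a written proof, the point being precisely that $d_{I,w}$ is an integer count of permutations equal to $d_w(i)/\binom{t}{i}$. Nothing further is needed.
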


In general, if it could be shown that there are no $(v, t, \lambda)$-feasible distributions for some choice of $v, t$ and $\lambda$, then it would imply that a PSCA$(v, t, \lambda)$ does not exist. However, it is possible to find $(v, t, \lambda)$-feasible distributions for infinitely many choices of $v, t,$ and $\lambda$. For example, if $t! \lambda$ is divisible by $v$, then a distribution vector with $d_{w}(i) = t! \lambda/v$ for $0 \leq i \leq v-1$ is $(v, t, \lambda)$-feasible. 

On the other hand, it is possible to use $(v, t, \lambda)$-feasible distributions to disprove the existence of a PSCA$(v, t, \lambda)$ even when such distributions do exist. For example, consider the (5, 3, 1)-feasible distributions. By \tref{t:primedist}, for such a distribution, $d_{w}(2) \in \{ 0, 3, 6 \}$. If $d_{w}(2) = 6$, then $d_{w} = (0, 0, 6, 0, 0)$ which violates \eref{eq:distros} for $s = 2$. Now suppose $d_{w}(2) = 3$. Again, by \tref{t:primedist}, $\{ d_{w}(0) + d_{w}(3), d_{w}(1) + d_{w}(4) \} = \{ 0, 3 \}$. As the reverse of a PSCA is also a PSCA, we can without loss of generality suppose $d_{w}(0) + d_{w}(3) = 0$. Then, for $s = 2$, \eref{eq:distros} reduces to $d_{w}(1) + 16d_{w}(4) = 24$. As $d_{w}(1)$ and $d_{w}(4)$ must be nonnegative integers that sum to 3, we find that this equation has no solutions. Therefore, in any (5, 3, 1)-feasible distribution, $d_{w}(2) = 0$. This means that if a PSCA(5, 3, 1) exists, then it would be impossible to place any symbol in column 2. This contradiction provides an alternate proof of the non-existence of a PSCA(5, 3, 1). See \cite{Math99} for an earlier proof.

In the proof of \tref{t:t+1dist}, we were able to enforce restrictions on a PSCA$(t + 1, t, \lambda)$ by considering the new array formed by deleting a symbol from this PSCA. We consider this kind of symbol deletion in a more general setting with the following theorem.

\begin{theorem}\label{t:compatdist}
  Let $\boldsymbol{d}_{w}=\big(d_{w}(0),\dots,d_{w}(v-1)\big)$ be the
  distribution vector for a symbol $w$ in $X$, a
  \textup{PSCA}$(v,t,\lambda)$. Let
  $\boldsymbol{d}'_{w}=\big(d'_{w}(0),\dots,d'_{w}(v-2)\big)$
  be the distribution vector of $w$
  in the \textup{PSCA} $X'$ obtained by deleting a symbol $w'\ne w$ from $X$.
  Then 
  \[
  \delta_k=\sum_{i=0}^k\big(d'_w(i)-d_w(i)\big)
  \]
  satisfies $0\le\delta_k\le d'_w(k)$ for $0\le k\le v-2$.
\end{theorem}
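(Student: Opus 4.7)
The plan is to track, for each permutation in $X$, how the position of $w$ changes when $w'$ is deleted. If $\pi\in X$ has $\pi(j)=w$, then after removing $w'$ and contracting, $w$ ends up at position $j-1$ if $w'$ appeared before $w$ in $\pi$, and at position $j$ if $w'$ appeared after. So the whole theorem should reduce to a telescoping sum once we introduce the right bookkeeping.

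Concretely, I would define
\[
A_j=\big|\{\pi\in X:\pi(j)=w,\ \pi^{-1}(w')<j\}\big|,\qquad
B_j=\big|\{\pi\in X:\pi(j)=w,\ \pi^{-1}(w')>j\}\big|,
\]
so that $d_w(j)=A_j+B_j$ (the two cases are exclusive and exhaustive once we fix $\pi(j)=w$). Reading off where $w$ lands in $X'$: the permutations counted by $A_{k+1}$ (with $w$ at position $k+1$ in $X$, $w'$ earlier) contribute $k$ to the position of $w$ in $X'$, and the permutations counted by $B_k$ (with $w$ at position $k$ in $X$, $w'$ later) keep $w$ at position $k$. Hence $d'_w(k)=A_{k+1}+B_k$.

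The sum in the theorem then telescopes:
\[
\delta_k=\sum_{i=0}^{k}\bigl(d'_w(i)-d_w(i)\bigr)=\sum_{i=0}^{k}\bigl((A_{i+1}+B_i)-(A_i+B_i)\bigr)=A_{k+1}-A_0=A_{k+1},
\]
since $A_0=0$ (there is no column earlier than column $0$ in which to place $w'$). Both inequalities are now immediate: $\delta_k=A_{k+1}\ge 0$, and $\delta_k=A_{k+1}\le A_{k+1}+B_k=d'_w(k)$.

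There is no real obstacle here beyond choosing the right quantities to count; the content of the theorem is essentially the observation that deleting $w'$ shifts $w$ left by at most one column, combined with the fact that the two shift outcomes partition the permutations contributing to each $d_w(j)$. Once $A_j$ and $B_j$ are introduced, the argument is a one-line telescoping identity plus nonnegativity.
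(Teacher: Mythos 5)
Your proof is correct and is essentially the paper's own argument: your $A_j$ and $B_j$ are exactly the paper's $c'_j$ and $c_j$, the identities $d_w(j)=A_j+B_j$ and $d'_w(k)=A_{k+1}+B_k$ are the same two counting relations, and your telescoping sum is the paper's induction showing $\delta_k=c'_{k+1}$. Nothing further is needed.
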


\begin{proof}
  Define $c_i=\big|\{\pi\in X:\pi^{-1}(w)=i<\pi^{-1}(w')\}\big|$
  and $c'_i=\big|\{\pi\in X:\pi^{-1}(w)=i>\pi^{-1}(w')\}\big|$
  for $0\le i\le v-1$. Then $c_i+c'_i=d_w(i)$ and $c_i+c'_{i+1}=d'_w(i)$.
  Now $c'_0=0$ and $c'_{i+1}-c'_i=d'_w(i)-c_i-c'_i=d'_w(i)-d_w(i)$.
  So it follows by induction on $i$ that $\delta_i=c'_{i+1}$ for
  $0\le i\le v-2$. The result then follows from the fact that
  $c_i\ge0$ and $c'_i\ge0$ for each $i$, by definition.
\end{proof}

We say that $\boldsymbol{d}_w$ and $\boldsymbol{d}'_w$ are \emph{compatible}
if they satisfy \tref{t:compatdist}. This test
can be used to eliminate some distributions from
consideration. If $\boldsymbol{d}_w$ is $(v, t, \lambda)$-feasible, it may be the case that there is no $(v-1, t, \lambda)$-feasible distribution
$\boldsymbol{d}'_w$ compatible with $\boldsymbol{d}_{w}$.
It may even happen that there is a compatible $\boldsymbol{d}'_w$, but that
all such candidates can themselves be ruled out because they are not
compatible with a $(v - 2, t, \lambda)$-feasible distribution, and so on.
A concrete example is that $(2,6,1,1,6,2)$ is a $(6,3,3)$-feasible distribution.
The only $(5,3,3)$-feasible distribution that it is compatible with is $(3,6,0,6,3)$. However $(3,6,0,6,3)$ is not compatible with any of the four $(4,3,3)$-feasible distributions, which are (3,9,0,6), (4,6,3,5), (5,3,6,4) and (6,0,9,3). Hence $(2,6,1,1,6,2)$ and $(3,6,0,6,3)$ can be eliminated from consideration.

\Tref{T:numdistro} records the number of $(v, t, 1)$-feasible distributions for different values of $v$ and $t$, as well as incorporating information about how many distributions cannot be ruled out using \tref{t:compatdist} in the manner just described.

\medskip

\begin{table}
\begin{center}

  \begin{tabular}{cc || c c c c c c cc}
%& \multicolumn{6}{c}{$v$}\\
&$t$ & $v=3$ & 4 & 5 & 6 & 7 & 8 & 9 & 10\\
\hline
$\lambda=1$&3 & 1/1 & 2/2 & 2/3 & 0/1 & 0/3 & 0/4 & 0/5 & 0/9\\
&4 & - & 1/1 & 3/3 & 6/6 & 8/13 & 19/30 & 36/57 & 61/119\\
&5 & - & - & 1/1 & 5/5 & 21/27 & 117/127 & 570/689 & 3359/3620 \\
%&6 & - & - & - & 1/1 & /13 & /177 & & \\[1ex]
\hline
$\lambda=2$&3 & 1/1 & 3/3 & 6/8 & 8/12 & 16/28 & 30/55 & 44/99 & 67/165\\
&4 & - & 1/1 & 5/5 & 17/17 & 59/74 & 261/291 & 1034/1128 & 3940/4235\\
&5&-&-&1/1&9/9&79/93&900/910&9267/9908&106859/107947\\
\hline
$\lambda=3$&3 & 1/1 & 4/4 & 11/14 & 32/37 & 84/99 & 224/252 & 547/609 & 1315/1409 \\
&4 & - & 1/1 & 7/7 & 35/35 & 195/221 & 1246/1296 & 7243/7341 & 38781/39486\\
&5& - & - & 1/1 & 13/13 & 179/199 & 2933/2951 & 46160/48150 & 790491/793171
  \end{tabular}
\caption{\label{T:numdistro}Number of feasible distributions. Each
  entry $r/s$ indicates that there are $s$ distributions that
  are $(v,k,\lambda)$-feasible, and that $r$ of these cannot be ruled out using
  \tref{t:compatdist}.}
\end{center}
\end{table}

\section{Exhaustive search algorithm}\label{s:comp}

We have seen in the previous section the relationship between a PSCA$(v, t, \lambda)$ and the smaller array that results from deleting a symbol from this PSCA. Specifically, we have seen that by deleting a symbol from a PSCA$(t + 1, t, \lambda)$, we are left with $\lambda$ copies of $\sym_{t}$. We can extend this argument to say that by deleting $v - t$ symbols from a PSCA$(v, t, \lambda)$, we obtain $\lambda$ copies of $\sym_{t}$. In this sense, every PSCA contains $\lambda$ copies of $\sym_{t}$. This relationship between smaller and larger PSCAs with the same strength and multiplicity allows for the design of an algorithm that can exhaustively search for a PSCA$(v, t, \lambda)$ by first cataloguing all possible PSCA$(v', t, \lambda)$ for $t \leq v' < v$. Such an algorithm is further aided by the results proved in the previous section. In order to catalogue all possible PSCAs for a particular choice of parameters, we must first establish a definition of isomorphism for PSCAs.

\begin{definition}\label{d:isomorph}
Two multisets of permutations, $X$ and $Y$, are \emph{isomorphic} if $Y$ can be obtained from $X$ by permuting the symbols and/or reversing every permutation.
\end{definition}

In searching for PSCA$(v, t, \lambda)$ for $v > t$, we employed two different methods. Both of these methods relied on a catalogue of isomorphism class representatives of PSCA$(v-1, t, \lambda)$. For each array in this catalogue, we tested every possible way of inserting a new symbol into each permutation of the array. In the first method, we assigned a $(v, t, \lambda)$-feasible distribution for this new symbol and found all possible PSCAs that can be formed when the new symbol obeys that distribution, before moving on to the next $(v, t, \lambda)$-feasible distribution. In the second method, we did not fix a distribution. Instead, we maintained a list of $(v, t, \lambda)$-feasible distributions that were consistent with the positions so far chosen for the new symbol. If that list ever became empty then we knew the current placements were unviable. Using these two search methods, we were able to independently count the number of isomorphism classes of PSCA$(v, t, \lambda)$ for different sets of parameters, as shown in \Tref{T:numPSCA}. In some cases it was not feasible to perform an exhaustive enumeration. In such cases, the number of PSCAs that we found before abandoning the search is given with a $+$ symbol indicating that the search was incomplete. In each such case we believe that the true number of PSCAs is much higher than the number that we quote.

In the cases when
$(v,t,\lambda)\in\{(5,3,1),(7,4,1),(7,5,1),(8,3,2),(8,4,2)\}$ our
enumeration was exhaustive, and demonstrated that no PSCA with these
parameters exists. For the first three of these parameter sets this was
already known, but the last two are new results. Our computations have
discovered several new values of the function $g$.

\begin{theorem}\label{t:gvalue}
$g(6,3)=g(7,3) = g(7, 4)=2$ and $g(8, 3) = 3$. Additionally, $g(8, 4) > 2$.
\end{theorem}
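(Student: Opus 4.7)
The plan is to establish the upper bounds by construction and the lower bounds from a mix of monotonicity, prior non-existence results, and exhaustive computer search.

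For the upper bounds $g(6,3)\le2$, $g(7,3)\le2$, $g(7,4)\le2$, and $g(8,3)\le3$, it suffices to exhibit a PSCA with the stated parameters. Such arrays fall out of the search of \sref{s:comp} (and can also be produced via the group-based constructions of \sref{s:grp}), so I would simply display, or cite, one representative for each of the four parameter sets.

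For the lower bounds I would split into easy and hard cases. Since a PSCA$(5,3,1)$ does not exist (by \cite{Math99}, or by the distribution-vector argument given at the end of \sref{s:distro}), and $g(v,3)\ge g(v-1,3)$ by deletion of the symbol $v-1$, we immediately get $g(6,3),g(7,3)\ge2$. Similarly, $g(7,4)\ge2$ because a PSCA$(7,4,1)$ does not exist \cite{Math99, Kle04}. The harder lower bounds $g(8,3)>2$ and $g(8,4)>2$ amount to showing that no PSCA$(8,3,2)$ and no PSCA$(8,4,2)$ exists, and for these I would invoke the exhaustive enumeration of \sref{s:comp}: first catalogue, up to the isomorphism of \dref{d:isomorph}, all PSCA$(v',t,\lambda)$ with $t\le v'\le7$; then, for each catalogued array, attempt to extend it by inserting a new symbol into every permutation in all ways consistent with some $(v'+1,t,\lambda)$-feasible distribution (as determined by \tref{t:distthm}, reinforced by \tref{t:primedist} when $t=3$) that is compatible, in the sense of \tref{t:compatdist}, with the chosen extension. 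The non-existence claims follow from the search returning an empty list.

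The main obstacle will be taming the search space, especially for $(8,4,2)$, where $48$ permutations must be built and the number of PSCA$(7,4,2)$ seeds is sizeable (cf.\ \Tref{T:numPSCA}). Here the compatibility test of \tref{t:compatdist} is decisive: it lets the algorithm abort a branch as soon as the partial distribution of the inserted symbol becomes inconsistent with every target distribution, and it is what distinguishes a tractable exhaustive search from an infeasible one. Running two independent implementations and cross-checking their outputs, as indicated in \sref{s:intro}, provides the needed confidence in the negative conclusions.
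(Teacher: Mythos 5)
Your proposal is correct and matches the paper's own argument: the paper likewise obtains the upper bounds by displaying explicit examples of a PSCA$(7,3,2)$, a PSCA$(8,3,3)$ and a PSCA$(7,4,2)$, and the lower bounds from the known nonexistence of PSCA$(5,3,1)$ and PSCA$(7,4,1)$ together with monotonicity, plus the exhaustive computational verification that no PSCA$(8,3,2)$ or PSCA$(8,4,2)$ exists. The extension-by-one-symbol search guided by feasible and compatible distributions that you describe is exactly the method of \sref{s:comp}.
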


\begin{proof}
Given the nonexistence results just mentioned, it suffices to display a
PSCA$(7, 3, 2)$, a PSCA$(8, 3, 3)$ and a PSCA$(7, 4, 2)$:
\begin{center}
\begin{tabular}{c | c || c | c || c | c | c | c}
\multicolumn{2}{c}{PSCA(7, 3, 2)} & \multicolumn{2}{c}{PSCA(8, 3, 3)} & \multicolumn{4}{c}{PSCA(7, 4, 2)}\\
\hline
0123465& 0642315& 04712563& 05672341& 0123465& 0254163 & 0351264 & 0432165\\
1540362& 1634052& 06432157& 07351462& 0621435& 0634125 & 0651432 & 0652341\\
2405163& 2610543& 16547203& 17453026& 1045263& 1254063 & 1432560 & 1530264\\
3054261& 3625401& 17630245& 25476301& 1632045& 1635402 & 1640253 & 1652043\\
4312560& 4651230& 26751043& 27410365& 2045361& 2103564 & 2341560 & 2530164\\
5231064& 5603124& 31526074& 34675102& 2601534& 2635104 & 2643015 & 2645103\\
\multicolumn{1}{c}{}&& 37206154& 42351067& 3015462 & 3214065 & 3402561 & 3520461\\
\multicolumn{1}{c}{}&& 46051327& 50213476& 3604521 & 3610254 & 3614520 & 3625401\\
\multicolumn{1}{c}{}&& 53764201& 61234075& 4015362 & 4123065 & 4351062 & 4520163\\
\multicolumn{3}{c}{}&&4610352 & 4620351 & 4621530 & 4653012\\
\multicolumn{3}{c}{}&&5103462 & 5214360 & 5341260 & 5402361\\
\multicolumn{3}{c}{}&&5603214 & 5604123 & 5612340 & 5643210\\

\end{tabular}
\end{center}
\end{proof}

There are 260\,664 isomorphism classes of PSCA$(5,3,3)$. We took the one
which has the largest automorphism group and extended it in all
possible ways. Doing so produced 3072, 481\,765 and 51\,448 isomorphism
classes of PSCAs with parameters (6,3,3), (7,3,3) and (8,3,3)
respectively. However, none of these extended to a PSCA(9,3,3). We
also performed a search for all PSCA(6,3,3) in which every symbol
has distribution vector $(3,3,3,3,3,3)$. Using \tref{t:compatdist}, we
were able to find all (5,3,3)-feasible distributions that are
compatible with this uniform distribution and thus could determine the
PSCA(5,3,3) that could potentially extend to such a PSCA(6,3,3). From them
we found 1\,053\,700 PSCA(6,3,3) up to isomorphism. These arrays extend to
35\,872\,460 PSCA(7,3,3) and 1\,992\,709 PSCA(8,3,3) up to isomorphism.
Again, none of these
arrays extend to a PSCA(9,3,3). We also built some other PSCA(8,3,3)
via several other routes, but were unable to find a PSCA(9,3,3).

\begin{table}
  \begin{center}
\begin{tabular}{|ccc|cc|}
  \hline
  $t$&$\lambda$&$v$&PSCAs&groups\\
  \hline
  3&1&3&1&1\\
  3&1&4&1&0\\
  3&1&5&0&0\\
  \hline
  3&2&3&1&1\\
  3&2&4&12&1\\
  3&2&5&314&0\\
  3&2&6&1957&5\\
  3&2&7&146&0\\
  3&2&8&0&0\\
  \hline
  3&3&3&1&1\\
  3&3&4&37&0\\
  3&3&5&260\,664&0\\
  3&3&6&29\,100\,897+&0+\\
  3&3&7&14\,943\,804+&0+\\
  3&3&8&2\,111\,540+&0+\\
  \hline
  4&1&4&1&1\\
  4&1&5&4&0\\
  4&1&6&2&1\\
  4&1&7&0&0\\
  \hline
  4&2&4&1&1\\
  4&2&5&12\,351&0\\
  4&2&6&32\,507&2\\
  4&2&7&1826&0\\
  4&2&8&0&0\\
  \hline
  5&1&5&1&1\\
  5&1&6&3461&0\\
  5&1&7&0&0\\
  \hline
\end{tabular}
\caption{\label{T:numPSCA}Number of PSCAs generated by adding one symbol at a time.}
\end{center}
\end{table}

The last column  of \Tref{T:numPSCA} lists the number of isomorphism classes
in our catalogue which contain a PSCA for which
the corresponding set (ignoring multiplicity of repeated permutations)
of permutations forms a group. To test if a PSCA is isomorphic to
a group it suffices to permute the symbols to ensure that one permutation
(it does not matter which) is the identity, and then check that the
resulting set of permutations is closed under composition. PSCAs that
form groups will be studied further in the next section, which will
provide details of all of the groups included in \Tref{T:numPSCA} (except
the trivial cases when $v=t$).

\begin{table}
\begin{center}
\begin{tabular}{c c c || c c}
$v$ & $t$ & $\lambda$ & Realised Distributions & Compatible Distributions\\
\hline
3 & 3 & 1 & 1 & 1\\
4 & 3 & 1 & 2 & 2\\
\hline
3 & 3 & 2 & 1 & 1\\
4 & 3 & 2 & 3 & 3\\
5 & 3 & 2 & 6 & 6\\
6 & 3 & 2 & 4 & 8\\
7 & 3 & 2 & 2 & 16\\
\hline
3 & 3 & 3 & 1 & 1\\
4 & 3 & 3 & 4 & 4\\
5 & 3 & 3 & 11 & 11\\
6 & 3 & 3 & 26 & 32 \\
\hline
4 & 4 & 1 & 1 & 1\\
5 & 4 & 1 & 3 & 3\\
6 & 4 & 1 & 1 & 6\\
\hline
4 & 4 & 2 & 1 & 1\\
5 & 4 & 2 & 5 & 5\\
6 & 4 & 2 & 10 & 17\\
7 & 4 & 2 & 16 & 59\\
\hline
5 & 5 & 1 & 1 & 1\\
6 & 5 & 1 & 5 & 5\\
\hline
\end{tabular}
\caption{\label{T:realdistro} Number of realised distributions for different parameter sets.}
\end{center}
\end{table}

In \Tref{T:numdistro} we showed how many distributions might be
achieved by symbols in PSCAs. In the ``realised distributions'' column
of \Tref{T:realdistro} we show how many of these distributions are
actually realised within some PSCA.  For comparison, the column headed
``compatible distributions'' repeats the smaller of the two bounds we
had computed in \Tref{T:numdistro}. \Tref{T:realdistro} covers all cases
where we computed (non-empty) exhaustive catalogues. It also covers the
case $(v,t,\lambda)=(6,3,3)$, where we were able to rule out 6 distributions
with targeted searches, assisted by \tref{t:compatdist}. The 6 unrealised
distributions were (0,9,1,3,0,5), (2,6,0,4,3,3), (3,1,8,0,2,4) and
their reverses. The other 26 distributions from \Tref{T:numdistro}
appeared in our partial catalogue.

%3,3,4,0,6,2
%4,2,0,8,1,3
%5,0,3,1,9,0

\section{PSCAs from permutation groups}\label{s:grp}

In this section, we consider PSCAs which can be constructed from permutation groups. For permutations $f, g \in \sym_{v}$, the composition $f \circ g$ is the permutation $(f \circ g)(x) = f(g(x))$. For a subgroup $H \leq G$ and for $g \in G$, the right coset $Hg$ is the set $\{ hg : h \in H \}$ whereas the left coset $gH$ is the set $\{ gh : h \in H \}$. If $H$ is a subgroup of $\sym_{v}$, then the right coset $Hg$ permutes the columns of $H$ according to $g$ while the left coset $gH$ permutes the symbols of $H$ according to $g$. Throughout this section, for $s \in \sym_{v, t}$, we use the notation $s = (s_{0}, \dots, s_{t-1})$. Moreover, $G$ will always denote a group such that if $G$ has order $v$, then the elements of $G$ are $\{ 0 , \dots, v-1 \}$, and $\psi$ will denote an injective homomorphism $\psi : G \rightarrow \sym_{v}, g \mapsto \psi_{g}$. We can then consider the action of $G$ on $\sym_{v, t}$ where, if $s = (s_{0}, \dots, s_{t-1}) \in \sym_{v, t}$ and $g \in G$, then $gs = (\psi_{g}(s_{0}), \dots, \psi_{g}(s_{t-1}))$. 

Mathon and van Trung \cite{Math99} found that there are exactly two non-isomorphic PSCA$(6, 4, 1)$; one forms a group isomorphic to $\sym_{4}$, the other forms three cosets of a group isomorphic to $D_{8}$. While noting the connection between their PSCAs and groups, their search methods did not focus on building PSCAs from groups (the same is true of our work in \sref{s:comp}). However, several connections between PSCAs and groups have been formalised by Na, Jedwab and Li \cite{Njl22} and they found a number of examples of PSCAs based on groups. Note that the permutation composition convention used in \cite{Njl22} differs from the convention used here.

\begin{lemma}\label{l:orbitcoverage}
Let $G$ be a group, $\psi : G \rightarrow \sym_{v}$ be an injective homomorphism, $T$ be the image of $\psi$ and let $Th$ be a right coset of $T$. If $x$ and $y$ are sequences belonging to the same orbit under the action of $G$ on $\sym_{v, t}$, then $x$ and $y$ are covered by the same number of permutations in $Th$.
\end{lemma}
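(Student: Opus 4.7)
The plan is to exhibit an explicit bijection between the permutations in $Th$ that cover $x$ and those that cover $y$, so the two counts are forced to agree. Since $x$ and $y$ lie in the same $G$-orbit, I would fix $g\in G$ with $y=gx$, i.e.\ $y_i=\psi_g(x_i)$ for each $i$, and use left multiplication by $\psi_g$, sending $\tau\mapsto\psi_g\circ\tau$, as the candidate bijection.

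First I would check that this map sends $Th$ to itself. Writing $\tau=th$ with $t\in T$, one has $\psi_g\circ\tau=(\psi_g t)h$, and $\psi_g t\in T$ because $T$ is a subgroup of $\sym_v$ (being the image of the homomorphism $\psi$). The map has the obvious two-sided inverse $\tau\mapsto\psi_{g^{-1}}\circ\tau$, so it is a bijection of $Th$ onto itself. Next I would transport the coverage condition. For each $i$,
\[
(\psi_g\circ\tau)^{-1}(y_i)=\tau^{-1}\bigl(\psi_g^{-1}(y_i)\bigr)=\tau^{-1}\bigl(\psi_g^{-1}(\psi_g(x_i))\bigr)=\tau^{-1}(x_i),
\]
so the inequalities $\tau^{-1}(x_i)<\tau^{-1}(x_{i+1})$ and $(\psi_g\circ\tau)^{-1}(y_i)<(\psi_g\circ\tau)^{-1}(y_{i+1})$ are literally identical. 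Thus $\psi_g\circ\tau$ covers $y$ if and only if $\tau$ covers $x$, and the bijection on $Th$ restricts to a bijection between the two sets of covering permutations, yielding the lemma.

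There is no substantive obstacle here; the work is essentially bookkeeping. The only thing I would be careful about is keeping the conventions straight: the composition rule $(f\circ g)(x)=f(g(x))$, the fact that multiplying on the left by $\psi_g$ preserves the right coset $Th$ (whereas multiplying on the right would not), and the sign of the relabelling, which is why I would spell out the inverse computation explicitly rather than invoke it implicitly.
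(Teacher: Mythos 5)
Your proposal is correct and follows essentially the same route as the paper: both use left multiplication by $\psi_g$ to carry permutations of $Th$ covering $x$ to permutations of $Th$ covering $y$. The only cosmetic difference is that you package the argument as a single explicit bijection (verifying the coset is preserved and computing the inverse), whereas the paper gives the injection in one direction and then reverses the roles of $x$ and $y$.
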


\begin{proof}
Let $x$ and $y$ be elements of $\sym_{v, t}$ that belong to the same orbit under the action of $G$. Then $gx = y$ for some $g \in G$. Let $0 \leq c_{0} < \dots < c_{t-1} \leq v-1$ and let $f \in Th$ such that $f(c_{i}) = x_{i}$ for $0 \leq i \leq t-1$. Then, $f$ covers $x$. Now consider $\psi_{g} \circ f$. As $f(c_{i}) = x_{i}$, $(\psi_{g}\circ f)(c_{i}) = \psi_{g}(x_{i})$ for $0 \leq i \leq t-1$. Therefore, $\psi_{g} \circ f$ covers $y$. Therefore, for every permutation in $Th$ that covers $x$, we can find a corresponding permutation that covers $y$. So, the number of permutations in $Th$ that cover $y$ is at least the number of permutations in $Th$ that cover $x$. By reversing the argument, and noting $x = g^{-1}y$, we find that the number of permutations in $Th$ that cover $x$ is at least the number of permutations in $Th$ that cover $y$. Thus, $x$ and $y$ are covered by the same number of permutations in $Th$.
\end{proof}

A consequence of \lref{l:orbitcoverage} is that in a right coset of a permutation group $\psi(G)$, we can determine the number of permutations covering each sequence in the orbit of a sequence $x$ under the action of $G$ on $\sym_{v, t}$ by simply finding the number of permutations in the coset that cover $x$. We will develop this point further in the context of transitive permutation groups with the following lemma.

\begin{lemma}\label{l:transitivegrp}
Let $G$ be a group, let $\psi : G \rightarrow \sym_{v}$ be an injective homomorphism such that the image, $T$, of $\psi$ is a transitive permutation group and let $X$ be an array constructed from right cosets of $T$. Furthermore, let $w \in [v]$, $0 \leq i \leq t-1$ and let $S = \{ s \in \sym_{v, t} : s_{i} = w  \}$. If every sequence in $S$ is covered by $\lambda$ permutations in $X$, then $X$ is a \textup{PSCA}$(v, t, \lambda)$.
\end{lemma}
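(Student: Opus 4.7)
The plan is to use transitivity of $T$ to show that every sequence in $\sym_{v,t}$ lies in the $G$-orbit of some sequence in $S$, and then invoke \lref{l:orbitcoverage} coset-by-coset to transfer the coverage count of $\lambda$ from $S$ to all of $\sym_{v,t}$.

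Concretely, given an arbitrary $s' = (s'_0,\dots,s'_{t-1}) \in \sym_{v,t}$, I would proceed as follows. First, since $T = \psi(G)$ acts transitively on $[v]$, there exists $g \in G$ such that $\psi_g(w) = s'_i$. Define $s = (s_0,\dots,s_{t-1})$ by $s_j = \psi_{g^{-1}}(s'_j)$ for $0 \le j \le t-1$. Because $\psi_{g^{-1}}$ is a bijection of $[v]$ and the $s'_j$ are distinct, the $s_j$ are also distinct, so $s \in \sym_{v,t}$. Furthermore, $s_i = \psi_{g^{-1}}(s'_i) = \psi_{g^{-1}}\bigl(\psi_g(w)\bigr) = w$, so $s \in S$. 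By construction $gs = s'$, so $s$ and $s'$ lie in the same orbit under the action of $G$ on $\sym_{v,t}$.

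Next, since $X$ is built from right cosets of $T$, write $X$ as the multiset union $X = \bigcup_\alpha Th_\alpha$ of right cosets (with repetition allowed). \lref{l:orbitcoverage} applied to each coset $Th_\alpha$ says that $s$ and $s'$ are covered by the same number of permutations of $Th_\alpha$. Summing over $\alpha$, the sequences $s$ and $s'$ are covered by the same number of permutations in $X$. By hypothesis, $s \in S$ is covered $\lambda$ times, so $s'$ is covered $\lambda$ times. Since $s'$ was arbitrary, $X$ is a PSCA$(v,t,\lambda)$.

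The core idea is thus just that transitivity of $T$ forces the orbit of $S$ under $G$ to be all of $\sym_{v,t}$; after that, \lref{l:orbitcoverage} does the rest and there is no real obstacle. The only point requiring a little care is ensuring that the image sequence $s$ built from $\psi_{g^{-1}}$ still has distinct entries and satisfies $s_i = w$, but this follows immediately from $\psi_{g^{-1}}$ being a bijection.
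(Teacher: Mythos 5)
Your proposal is correct and follows essentially the same route as the paper: use transitivity to show that every $G$-orbit on $\sym_{v,t}$ meets $S$, then apply \lref{l:orbitcoverage} coset by coset and sum. The only cosmetic difference is that you map an arbitrary $s'$ back into $S$ via $g^{-1}$, whereas the paper maps an arbitrary $s$ forward into $S$; the content is identical.
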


\begin{proof}
Let $s \in \sym_{v, t}$. Then, as $T$ is transitive, there is a $g \in G$ such that $\psi_{g}(s_{i}) = w$. Therefore, the orbit of $s$ contains a sequence in $S$. As every orbit of the action of $G$ on $\sym_{v, t}$ contains a representative from $S$, then by \lref{l:orbitcoverage}, if every sequence in $S$ is covered by $\lambda$ permutations in $X$, then every sequence in $\sym_{v, t}$ is also covered by $\lambda$ permutations in $X$.
\end{proof}

\subsection*{Elementary abelian 2-groups}

Throughout this subsection, we use $E_{v}$ to denote an elementary abelian 2-group on the set $[v]$ with identity 0 and operation $\oplus$. Then for a group $E_{v}$, we fix $\psi : E_{v} \rightarrow \sym_{v}$ to be the homomorphism that maps $g \mapsto \psi_{g}$ where $\psi_{g}(x) = g \oplus x$. We then let $T$ be the image of $\psi$. Under this homomorphism, $gs = (g \oplus s_{0}, \dots, g \oplus s_{t-1})$ for $g \in E_{v}$ and $s \in \sym_{v, t}$. By construction, $T$ is a sharply transitive set of permutations, a fact that will be critical in what follows. We begin our analysis of elementary abelian 2-groups with an overview of PSCAs built from $E_{4}$. Within $\sym_{4}$, there are several subgroups isomorphic to $E_{4}$. However, the only one of these subgroups that is sharply transitive (and hence may be represented within $T$) is the following:
\begin{align*}
\begin{matrix}
0 & 1 & 2 & 3\\
1 & 0 & 3 & 2\\
2 & 3 & 0 & 1\\
3 & 2 & 1 & 0
\end{matrix}
\end{align*}

\begin{figure}
\begin{center}
\begin{tabular}{c  c  c}

\begin{tabular}{c c c c}
0 & 1 & 2 & 3\\
1 & 0 & 3 & 2\\
2 & 3 & 0 & 1\\
3 & 2 & 1 & 0
\end{tabular}

& 

\begin{tabular}{c c c c}
0 & 2 & 1 & 3\\
1 & 3 & 0 & 2\\
2 & 0 & 3 & 1\\
3 & 1 & 2 & 0
\end{tabular}

&

\begin{tabular}{c c c c}
0 & 3 & 1 & 2\\
1 & 2 & 0 & 3\\
2 & 1 & 3 & 0\\
3 & 0 & 2 & 1
\end{tabular}
\vspace{0.35cm}\\

\begin{tabular}{c c c c}
0 & 1 & 3 & 2\\
1 & 0 & 2 & 3\\
2 & 3 & 1 & 0\\
3 & 2 & 0 & 1
\end{tabular}

&

\begin{tabular}{c c c c}
0 & 2 & 3 & 1\\
1 & 3 & 2 & 0\\
2 & 0 & 1 & 3\\
3 & 1 & 0 & 2
\end{tabular}

&

\begin{tabular}{c c c c}
0 & 3 & 2 & 1\\
1 & 2 & 3 & 0\\
2 & 1 & 0 & 3\\
3 & 0 & 1 & 2
\end{tabular}
\vspace{0.35cm}\\

Type~A & Type~B & Type~C
\end{tabular}
\caption{\label{f:cosets}Three types of cosets of $E_4$}
\end{center}
\end{figure}

The cosets of this group within $\sym_{4}$ are shown in \fref{f:cosets}.
We refer to the cosets on the left as having Type~A coverage, the cosets in the middle as having Type~B coverage and the cosets on the right as having Type~C coverage. Cosets of the same type cover the same set of triples. Each coset covers 16 triples of $\sym_{4, 3}$ exactly once, leaving 8 triples uncovered. These uncovered triples are recorded in \Tref{T:uncovtrip}.

\begin{table}
\begin{center}
\begin{tabular}{c c c}
 Type~A & Type~B & Type~C\\
\hline
%Uncovered triples
 021 & 012 & 013\\
 031 & 032 & 023\\
 120 & 103 & 102\\
 130 & 123 & 132\\
 203 & 210 & 201\\
 213 & 230 & 231\\
 302 & 301 & 310\\
 312 & 321 & 320
\end{tabular}
\caption{\label{T:uncovtrip}Triples uncovered by cosets of Type~A, B and C.}
\end{center}
\end{table}

Observe that the sets of triples uncovered by Type~A, Type~B and Type~C cosets partition $\sym_{4, 3}$. Suppose $X$ is a PSCA$(4, 3, \lambda)$ which is built from a combination of cosets of our $E_{4}$ permutation group. As the number of permutations in $X$ is $6\lambda$, the total number of cosets that make up $X$ is $3\lambda/2$. Consider the triple 012. This triple is covered by Type~A and Type~C cosets but is not covered by Type~B cosets. Given that the number of permutations that cover 012 is $\lambda$, there must be $\lambda/2$ Type~B cosets. Similar arguments involving other triples (e.g. 021 and 013) demonstrate that $X$ must be built from $\lambda/2$ of each type of coset. Furthermore, because of the coverage properties of each coset type, any combination of $\lambda/2$ Type~A cosets, $\lambda/2$ Type~B cosets and $\lambda/2$ Type~C cosets will form a PSCA$(4, 3, \lambda)$. Therefore, an array built from a combination of cosets of $E_{4}$ will form a PSCA$(4, 3, \lambda)$ if and only if the array contains an equal number of each type of coset.  

We use this characterisation to aid us in our search for PSCAs from cosets of permutation representations of the elementary abelian 2-group of order $v$ with $v > 4$. Obviously these larger groups contain many subgroups isomorphic to $E_{4}$. As in the general case above, we isolate a subset of triples of $\sym_{v, 3}$ such that balanced coverage on these triples implies balanced coverage for every triple in $\sym_{v, 3}$.

\begin{lemma}\label{l:E4subgroups}
Let $\mathcal{H}$ be the set of order $4$ subgroups of $E_{v}$ and let $S$ be the set of triples defined by
\begin{align*}
S = \big\{ (s_{0}, s_{1}, s_{2}) \in \sym_{v, 3} : \{s_{0}, s_{1}, s_{2}\} \subset H \textup{ for some } H \in \mathcal{H}  \big\}
\end{align*}
Let $X$ be an array constructed from right cosets of $T$ in $\sym_{v}$. If every triple in $S$ is covered by $\lambda$ permutations in $X$, then $X$ is a \textup{PSCA}$(v, 3, \lambda)$.
\end{lemma}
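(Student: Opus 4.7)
The plan is to reduce immediately to \lref{l:transitivegrp}, and then to observe that the set $S$ in the statement already contains every triple that \lref{l:transitivegrp} requires us to check.

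First I would note that $T$ is the image of the regular representation of $E_v$ (via $\psi_g : x \mapsto g \oplus x$), so $T$ is sharply transitive on $[v]$. By hypothesis $X$ is built from right cosets of $T$, so \lref{l:transitivegrp} applies with any choice of column index $i \in \{0,1,2\}$ and any symbol $w \in [v]$. I would take the natural choice $i = 0$ and $w = 0$ (the identity of $E_v$). The set of sequences to check is then
\[
S' \;=\; \{(0, a, b) \in \sym_{v,3} : a, b \in [v] \setminus \{0\}, \ a \neq b\}.
\]

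The crux is the claim that $S' \subseteq S$. Fix distinct nonzero $a, b \in E_v$ and consider $H = \{0, a, b, a \oplus b\}$. Because $a \neq 0$, $b \neq 0$, and $a \neq b$, the four listed elements are pairwise distinct, so $|H| = 4$. Since every element of $E_v$ is self-inverse and $\oplus$ is associative and commutative, $H$ is closed under $\oplus$ (e.g.\ $a \oplus (a \oplus b) = b$, $(a \oplus b) \oplus (a \oplus b) = 0$, and so on), hence $H \in \mathcal{H}$. As $\{0, a, b\} \subset H$, the triple $(0, a, b)$ lies in $S$.

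Since every triple in $S'$ belongs to $S$, the hypothesis of the lemma guarantees that each such triple is covered by exactly $\lambda$ permutations of $X$. \lref{l:transitivegrp} then yields that $X$ is a PSCA$(v, 3, \lambda)$. There is no real obstacle here: the substantive work is already done by \lref{l:orbitcoverage} and \lref{l:transitivegrp}, and the only new input is the elementary group-theoretic fact that any two distinct nonzero elements of an elementary abelian 2-group generate an order-$4$ subgroup.
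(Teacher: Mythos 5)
Your proof is correct and follows essentially the same route as the paper: both arguments reduce to triples of the form $(0,a,b)$ (the paper via \lref{l:orbitcoverage} directly, you via \lref{l:transitivegrp}, which is just that lemma specialised to transitive groups) and both rest on the observation that $\{0,a,b\}$ lies in the order-$4$ subgroup $\{0,a,b,a\oplus b\}$.
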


\begin{proof}
First, we observe that if $\{ x, y, z \}$ is a 3-subset of an elementary abelian 2-group, then $\{ x, y, z, x \oplus y \oplus z \}$ is a coset of the order 4 subgroup $\{ 0, x \oplus y, x \oplus z, y \oplus z \}$. Furthermore, $x \oplus y \oplus z$ is the only element we can include with $\{ x, y, z \}$ in order to form an order 4 coset.

Let $(x, y, z) \in \sym_{v, 3}$. If $\psi_{x}$ acts on $(x, y, z)$, we obtain the triple $(0, x \oplus y, x \oplus z)$. As per the previous paragraph, $\{ 0, x \oplus y, x \oplus z \}$ forms a subset of an order 4 subgroup so $(0, x \oplus y, x \oplus z) \in S$. Hence, each orbit of $\sym_{v, 3}$ under the action of $E_{v}$ contains a triple from $S$. Therefore, by \lref{l:orbitcoverage}, if every triple in $S$ is covered by $\lambda$ permutations in $X$, then $X$ is a PSCA$(v, 3, \lambda)$.
\end{proof}

\begin{definition}\label{d:reduced}
Let $X \subseteq \sym_{v}$ be a multiset of permutations. For $W \subseteq [v]$, the \textit{reduced array of $X$ on $W$}, denoted by $X[W]$, is the array we obtain by removing every symbol of $[v] \backslash W$ from $X$.
\end{definition}

Let $Y$ be a right coset of $T$ in $\sym_{v}$, let $H$ be an order 4 subgroup of $E_{v}$ and consider the reduced array $Y[H]$. If we partition the rows of $Y[H]$ according to the cosets of $H$, then each part will form a coset of the sharply transitive $E_{4}$ permutation group. By taking $X$ to be a collection of right cosets of $T$, we can determine whether $X[H]$ forms a PSCA by analysing the coverage type of each coset of $E_{4}$ that appears in $X[H]$. As a result of \lref{l:E4subgroups}, if the reduced array $X[H]$ is a PSCA of strength 3 for each $H \in \mathcal{H}$, then $X$ will be a PSCA of strength 3.

\begin{lemma}\label{l:E4auto}
Let $f$ be an order $n$ automorphism of $E_{v}$ and let $X$ be the array
\begin{align*}
X = \bigcup_{i = 0}^{n-1} Tf^{i}.
\end{align*}
Let $H$ be an order $4$ subgroup of $E_{v}$. If the reduced array $X[H]$ is a \textup{PSCA}$(4, 3, \lambda)$, then $X[f^{i}(H)]$ will also be a \textup{PSCA}$(4, 3, \lambda)$ for $1 \leq i \leq n-1$.
\end{lemma}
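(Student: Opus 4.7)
The plan is to show that the array obtained from $X[H]$ by relabeling each symbol $w\in H$ to $f(w)\in f(H)$ is precisely the array $X[f(H)]$. Since relabeling symbols is a PSCA isomorphism, this handles the case $i=1$, and the general case follows by iteration (or by replacing $f$ with $f^i$, noting that $f^i$ is also an automorphism of $E_v$).

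First, since $f$ is a bijection on $E_v=[v]$, I would regard $f$ itself as an element of $\sym_v$. The key algebraic identity is that, for every $g\in E_v$,
\[
f\circ\psi_g=\psi_{f(g)}\circ f,
\]
which holds because $f(\psi_g(x))=f(g\oplus x)=f(g)\oplus f(x)=\psi_{f(g)}(f(x))$ by the automorphism property. Post-composing with $f^i$ and letting $g$ range over $E_v$ (noting $f(E_v)=E_v$) yields $f\circ Tf^i=Tf^{i+1}$ as multisets. Taking the union over $0\le i\le n-1$ and using $f^n=\mathrm{id}$ (so $Tf^n=T$) gives $f\circ X=X$ as multisets. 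This identity---that conjugating the regular representation $T$ by the automorphism $f$ merely cycles through the cosets $Tf^i$---is the main point to establish.

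Finally, I would observe that replacing each entry $w$ of a permutation $\pi\in\sym_v$ by $f(w)$ yields the permutation $f\circ\pi$; carrying out this relabeling on every permutation of $X$ produces $f\circ X$. For any $W\subseteq[v]$, the row of $(f\circ X)[f(W)]$ corresponding to $\pi\in X$ retains exactly those columns $j$ for which $f(\pi(j))\in f(W)$, equivalently $\pi(j)\in W$, and the retained values are the $f$-images of the entries kept in row $\pi$ of $X[W]$. Hence $(f\circ X)[f(H)]$ is obtained from $X[H]$ by relabeling the symbols of $H$ via $f$, and so is a PSCA$(4,3,\lambda)$ if and only if $X[H]$ is. Combined with $f\circ X=X$, this gives that $X[f(H)]$ is a PSCA$(4,3,\lambda)$, and repeating the argument with $f$ replaced by $f^i$ delivers the conclusion for all $1\le i\le n-1$. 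The only nontrivial step is the conjugation identity above; once it is in hand the rest is a routine check that symbol-relabeling commutes with the reduction operation $X\mapsto X[W]$.
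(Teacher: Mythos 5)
Your proposal is correct and takes essentially the same approach as the paper: your identity $f\circ\psi_{g}=\psi_{f(g)}\circ f$ is exactly the paper's key step $fT=Tf$ (proved there as $T=f^{-1}Tf$), and both arguments conclude by observing that $X$ is invariant under the symbol-relabelling $f$, so that $X[f^{i}(H)]$ is obtained from $X[H]$ by relabelling symbols, which preserves the PSCA property.
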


\begin{proof}
First we show that $T = f^{-1}Tf$. Let $g \in E_{v}$. Then, we can consider $\psi_{g} \in T$ and the composition $f^{-1}\psi_{g}f$. Let $x \in E_{v}$. Then, $f^{-1}\psi_{g}f(x) = f^{-1}(g \oplus f(x))$. As $f$ is an automorphism of $E_{v}$, so too is $f^{-1}$. Hence $f^{-1}(g \oplus f(x)) = f^{-1}(g) \oplus x$. Therefore, $f^{-1}\psi_{g}f = \psi_{f^{-1}(g)}$ and hence, $f^{-1}Tf \subseteq T$. Now, $\psi_{g} = \psi_{f^{-1}(f(g))} = f^{-1}\psi_{f(g)}f$ by the above argument. So, $T \subseteq f^{-1}Tf$ and thus, $T = f^{-1}Tf$. Therefore, $fT = Tf$ and so we can consider $Tf$ as being an array in which the symbols of $T$ have been permuted according to $f$. As a result, the reduced array $T[H]$ is isomorphic to $Tf[f(H)]$. More generally, the reduced array $Tf^{i}[H]$ is isomorphic to $Tf^{i + 1}[f(H)]$ for $0 \leq i \leq n-1$. Moreover, the isomorphism in each case is the restriction of $f$ to $H$. Therefore, $X[H]$ is isomorphic to $X[f(H)]$. Applying this argument to the subgroups $f^{i}(H)$ and $f^{i + 1}(H)$ for $0 \leq i \leq n-1$, we find that $X[H]$ is isomorphic to $X[f^{i}(H)]$ for $1 \leq i \leq n-1$. Therefore, if $X[H]$ is a PSCA$(4, 3, \lambda)$, then so is $X[f^{i}(H)]$ for $1 \leq i \leq n-1$. 
\end{proof}

Essentially, \lref{l:E4subgroups} demonstrates that in a collection of right cosets of $T$, it suffices to check the coverage of triples whose elements form a subset of an order 4 subgroup $E_{v}$ in order to determine whether the cosets form a PSCA. When these cosets are related by an automorphism of $E_{v}$, we are able to further restrict what triples need to be checked by allowing us to consider only certain subgroups, depending on the automorphism $f$. In each case, the reduced array on any order 4 subgroup $H$ will form a collection of cosets of $E_{4}$ and so we can use the characterisation at the start of this section to determine whether these reduced arrays form PSCAs. Using these methods, we have been able to find PSCAs of orders 4, 8, 16 and 32 with strength 3. The following are examples of a PSCA(4, 3, 2) and a PSCA(8, 3, 4) (note that Na, Jedwab and Li \cite{Njl22} also found a PSCA$(8,3,4)$).

\begin{center}
\begin{tabular}{c || c | c}
\multicolumn{1}{c}{PSCA(4, 3, 2)} & \multicolumn{2}{c}{PSCA(8, 3, 4)}\\
\hline
0123& 01234567& 42671053\\
1032& 10543276& 53106742\\
2301& 25076143& 60435217\\
3210& 34701652& 71342506\\
0231& 43610725& 07245316\\
1320& 52167034& 16532407\\
2013& 67452301& 23061754\\
3102& 76325410& 32716045\\
0312& 06253471& 45607132\\
1203& 17524360& 54170623\\
2130& 24017635& 61423570\\
3021& 35760124& 70354261\\

\end{tabular}
\end{center}

The PSCA(4, 3, 2) forms a permutation group isomorphic to the alternating group $A_{4}$. The PSCA(8, 3, 4) forms a permutation group isomorphic to $A_{4} \times C_{2}$. We also have the following PSCAs of orders 16 and 32.

\begin{theorem}\label{t:E4gval}
$g(v, 3) \leq 16$ for $v \leq 16$ and $g(v, 3) \leq 96$ for $v \leq 32$.
\end{theorem}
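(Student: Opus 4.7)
The plan is to establish both bounds by exhibiting explicit arrays: a PSCA$(16, 3, 16)$ handles the first claim and a PSCA$(32, 3, 96)$ handles the second. Since deleting any symbol from a PSCA$(v, t, \lambda)$ yields a PSCA$(v-1, t, \lambda)$, the monotonicity $g(v,3)\ge g(v-1,3)$ then propagates each construction to all smaller orders.

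Let $T$ denote the sharply transitive image of $E_v$ under the regular homomorphism $\psi$ used throughout this subsection. I would search for a short list of automorphisms $f_1,\dots,f_r$ of $E_v$ of respective orders $n_1,\dots,n_r$, together with right coset representatives $g_1,\dots,g_r$, such that the multiset
\[
X = \bigcup_{j=1}^{r}\bigcup_{i=0}^{n_j - 1} Tf_j^{i}g_j
\]
consists of the required number of cosets of $T$: six cosets when $v=16$, giving $|X|=96=3!\cdot 16$, and eighteen cosets when $v=32$, giving $|X|=576=3!\cdot 96$.

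By \lref{l:E4subgroups}, such an $X$ is a PSCA$(v, 3, \lambda)$ as soon as, for every order-$4$ subgroup $H$ of $E_v$, the reduced array $X[H]$ is a PSCA$(4, 3, \lambda)$. By \lref{l:E4auto}, for each $f_j$ this need only be checked on a single representative $H$ from each orbit of $\langle f_j \rangle$ on the order-$4$ subgroups of $E_v$. Since each row block of $X[H]$, grouped by cosets of $H$, is a coset of the sharply transitive $E_4$-representation, the Type~A/B/C analysis from the start of this subsection applies: $X[H]$ is a PSCA$(4, 3, \lambda)$ precisely when it contains equally many cosets of Type~A, Type~B, and Type~C. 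This reduces verification to a finite, mechanical count.

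The hard part is finding automorphisms and coset representatives for which the Type~A/B/C balance holds simultaneously on every orbit representative. The group $E_{16}$ has $35$ subgroups of order $4$ and $E_{32}$ has $155$, so exploiting an automorphism of moderately large order (for instance an order-$6$ element of $\mathrm{GL}(4,\mathbb{F}_2)$, or a similarly rich element of $\mathrm{GL}(5,\mathbb{F}_2)$) cuts these numbers down substantially. I would perform a targeted computer search over plausible automorphisms and coset representatives to locate valid choices; the proof is then completed by displaying the resulting $X$ (or the data that defines it) and confirming the Type~A/B/C count on each of the chosen orbit representatives.
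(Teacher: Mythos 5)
Your proposal takes essentially the same route as the paper: both bounds are proved by exhibiting explicit unions of cosets of the regular representation of $E_{16}$ (six cosets generated by an order-6 automorphism) and of $E_{32}$ (eighteen cosets built up via automorphisms $f_1,f_2,f_3$), verified through \lref{l:E4subgroups} and the Type~A/B/C balance, with \lref{l:E4auto} cutting down the orbit representatives in the $v=16$ case. One small caveat: \lref{l:E4auto} as stated applies only to arrays of the form $\bigcup_i Tf^i$ for a single automorphism, and indeed the paper's $32$-point construction (whose last step uses \emph{left} cosets) falls outside its scope, so there the verification rests on \lref{l:E4subgroups} alone over all $155$ order-4 subgroups.
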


\begin{proof}
To prove the first part of the theorem, we need only present a PSCA$(16, 3, 16)$. We let $G$ be the group isomorphic to $E_{16}$ generated by the permutations
\begin{center}
(0 1)(2 3)(4 5)(6 7)(8 9)(10 11)(12 13)(14 15),\\
(0 2)(1 3)(4 14)(5 15)(6 12)(7 13)(8 10)(9 11),\\
(0 4)(1 5)(2 14)(3 15)(6 10)(7 11)(8 12)(9 13),\\
(0 8)(1 9)(2 10)(3 11)(4 12)(5 13)(6 14)(7 15).
\end{center}
We then let $f =$(1 8 9)(2 4 15 11 5 7)(3 12 6 10 13 14). Then,
\begin{align*}
X = \bigcup_{i = 0}^{5} Gf^{i}
\end{align*}
forms a PSCA(16, 3, 16). The 96 permutations of this PSCA also form a group which can be generated by
\begin{center}
(1 8 9)(2 4 15 11 5 7)(3 12 6 10 13 14),\\
(0 4 7)(1 13 15)(2 3 10)(5 14 8)(6 9 12).
\end{center}

As a result of \lref{l:E4subgroups}, in order to check whether $X$ forms a PSCA, we need only check that the reduced arrays of $X$ corresponding to the 35 order 4 subgroups of $G$ form PSCA(4, 3, 16). As the cosets of $G$ from which $X$ is constructed are related by an automorphism, we can use \lref{l:E4auto} to further limit the number of reduced arrays of $X$ that we need to check in order to verify that $X$ is a PSCA. The orbits of the 35 order 4 subgroups of $G$ under $f$ are as follows.
\begin{align*}
&\big\{ \{ 0, 1, 2, 3 \}, \{0, 4, 8, 12 \}, \{ 0, 6, 9, 15 \}, \{0, 1, 10, 11\}, \{0, 5, 8, 13\}, \{0, 7, 9, 14\} \big\}\\
&\big\{ \{ 0, 1, 6, 7 \}, \{ 0, 2, 8, 10 \}, \{ 0, 4, 9, 13  \}, \{ 0, 1, 14, 15 \}, \{ 0, 3, 8, 11 \}, \{ 0, 5, 9, 12 \} \big\} \\
&\big\{ \{ 0, 2, 4, 14 \}, \{ 0, 3, 4, 15 \}, \{ 0, 11, 12, 15 \}, \{ 0, 5, 6, 11 \}, \{ 0, 5, 7, 10 \}, \{ 0, 2, 7, 13 \} \big\} \\
&\big\{ \{ 0, 2, 6, 12 \}, \{ 0, 4, 6, 10 \}, \{ 0, 10, 13, 15 \}, \{ 0, 11, 13, 14 \}, \{ 0, 3, 5, 14 \}, \{ 0, 3, 7, 12 \} \big\} \\
&\big\{ \{ 0, 1, 4, 5 \}, \{ 0, 7, 8, 15 \}, \{ 0, 2, 9, 11 \} \big\}\\
&\big\{ \{ 0, 1, 12, 13 \}, \{ 0, 6, 8, 14\}, \{0, 3, 9, 10 \} \big\}\\
&\big\{ \{ 0, 2, 5, 15 \}, \{ 0, 4, 7, 11 \} \big\} \\
&\big\{ \{ 0, 3, 6, 13 \}, \{ 0, 10, 12, 14 \} \big\}\\
&\big\{ \{ 0, 1, 8, 9 \} \big\}
\end{align*}
Hence, by \lref{l:E4auto}, we need only check the reduced array of one subgroup from each of these 9 orbits to verify that $X$ is a PSCA.

For the second part of the theorem, we present a PSCA$(32, 3, 96)$. We let $G_{32}$ be the group isomorphic to $E_{32}$ generated by the permutations
\begin{center}
(0 1)(2 3)(4 5)(6 7)(8 9)(10 11)(12 13)(14 15)(16 17)(18 19)(20 21)(22 23)(24 25)(26 27)(28 29)(30 31),\\
(0 2)(1 3)(4 28)(5 29)(6 30)(7 31)(8 10)(9 11)(12 20)(13 21)(14 22)(15 23)(16 18)(17 19)(24 26)(25 27),\\
(0 4)(1 5)(2 28)(3 29)(6 26)(7 27)(8 14)(9 15)(10 22)(11 23)(12 18)(13 19)(16 20)(17 21)(24 30)(25 31),\\
(0 8)(1 9)(2 10)(3 11)(4 14)(5 15)(6 12)(7 13)(16 24)(17 25)(18 26)(19 27)(20 30)(21 31)(22 28)(23 29),\\
(0 16)(1 17)(2 18)(3 19)(4 20)(5 21)(6 22)(7 23)(8 24)(9 25)(10 26)(11 27)(12 28)(13 29)(14 30)(15 31).
\end{center}
We then let $f_{1}$ be the following order 2 automorphism of $G_{32}$: 
\begin{center}
(2 8)(3 9)(4 6)(5 7)(12 28)(13 29)(14 30)(15 31)(18 24)(19 25)(20 22)(21 23).
\end{center}
We then let $G_{64} = G_{32} \cup G_{32} f_{1}$. Observe that $G_{64}$ also forms a group. Then, we let $f_{2}$ be the following order 3 automorphism of $G_{32}$:
\begin{center}
(2 12 24)(3 13 25)(4 6 10)(5 7 11)(8 18 28)(9 19 29)(20 22 26)(21 23 27).
\end{center}
We then let $G_{192} = G_{64} \cup G_{64}f_{2} \cup G_{64}f_{2}^{2}$. Again, 
$G_{192}$ forms a group. Finally, we let $f_{3}$ be the following order 3 automorphism of $G_{32}$:
\begin{center}
(1 16 17)(3 18 19)(5 20 21)(6 7 23)(9 24 25)(11 26 27)(12 13 29)(15 30 31).
\end{center}
Then $G_{192} \cup f_{3}G_{192} \cup f_{3}^{2}G_{192}$ is a PSCA$(32, 3, 96)$. Although this construction is not of the form described in \lref{l:E4auto}, it is a collection of right cosets of $G_{32}$. Therefore, we can use \lref{l:E4subgroups} to check that this array is indeed a PSCA.
\end{proof}

We remark that even though $G_{32}$, $G_{64}$ and $G_{192}$ are groups, the PSCA$(32, 3, 96)$ described in \tref{t:E4gval} is not a group. We also note that while $f_{3}$ is an automorphism of $G_{32}$, it is not an automorphism of $G_{192}$. As such, the shift to left cosets in the final step of the construction is significant as taking right cosets would not form a PSCA.

\begin{table}[htb!]
  \begin{center}
\begin{tabular}{| c | c | c |}
\hline
$(v, \lambda)$ & Group & Generators\\
\hline
(4,2) & $A_{4}$ & $\langle(1,2,3),(0,1,2)\rangle$ \\
\hline
\multirow{5}{*}{(6,2)} & $A_{4}$ & $\langle(0,5,4)(1,2,3),(0,5,1)(2,3,4) \rangle$ \\\cline{2-3}
& \multirow{4}{*}{$D_{12}$} & $\langle(0,5,4,2,1,3),(0,5)(1,2)(3,4)\rangle$\\
& & $\langle(0,4,5,2,1,3),(0,4)(1,2)(3,5)\rangle$\\
& & $\langle(0,3,1,5,4,2),(0,5)(1,3)(2,4)\rangle$\\
& & $\langle(0,3,1,4,5,2),(0,4)(1,3)(2,5)\rangle$\\
\hline
\multirow{6}{*}{(6,4)} & $C_{2} \times A_{4}$ & $\langle(0,5,1,2,3,4),(0,5,4)(1,2,3)\rangle$\\ \cline{2-3}
& \multirow{5}{*}{$\sym_{4}$} & $\langle(0,2)(1,3)(4,5),(0,4,5)(1,3,2)\rangle$\\
& & $\langle(0,2)(1,3)(4,5),(0,4,5)(1,2,3)\rangle$\\
& & $\langle(0,5)(1,3)(2,4),(0,2,4)(1,3,5)\rangle$\\
& &$\langle(1,3)(4,5),(0,1,5)(2,4,3)\rangle$\\
& &$\langle(1,3)(2,4),(0,1,4)(2,3,5)\rangle$\\
\hline
\multirow{22}{*}{(8,4)} & \multirow{2}{*}{$SL(2, 3)$} & $\langle(0,7,4,2)(1,5,3,6),(0,7,1)(2,3,4)\rangle$\\
& & $\langle(0,7,6,4)(1,3,2,5),(0,7,3)(4,5,6)\rangle$\\ \cline{2-3}
& \multirow{12}{*}{$\sym_{4}$} & $\langle(0,5,4,2)(1,7,3,6),(0,7,4)(1,3,2)\rangle$\\
& & $\langle(0,5,4,2)(1,6,3,7),(0,7,4)(1,3,5)\rangle$\\
& & $\langle(0,5,6,3)(1,4,7,2),(0,7,6)(2,4,5)\rangle$\\
& & $\langle(0,5,7,3)(1,4,6,2),(0,7,6)(2,5,4)\rangle$\\
& & $\langle(0,5,7,4)(1,3,6,2),(0,7,6)(2,5,3)\rangle$\\
& & $\langle(0,5,6,4)(1,3,7,2),(0,7,6)(2,3,5)\rangle$\\
& & $\langle(0,6,4,3)(1,7,5,2),(0,5,4)(2,7,6)\rangle$\\
& & $\langle(0,7,3,6)(1,2,4,5),(0,5,3)(1,4,7)\rangle$\\
& & $\langle(0,7,3,6)(1,5,4,2),(0,5,3)(1,4,6)\rangle$\\
& & $\langle(0,7,4,3)(1,6,5,2),(0,5,4)(2,6,7)\rangle$\\
& & $\langle(0,7,4,6)(1,3,5,2),(0,5,4)(2,3,7)\rangle$\\
& & $\langle(0,7,5,2)(1,6,4,3),(0,5,4)(2,3,6)\rangle$\\ \cline{2-3}
& \multirow{8}{*}{$C_{2} \times A_{4}$} & $\langle(1,4,7)(2,5,3),(0,1)(2,7)(3,6)(4,5)\rangle$\\
& &$\langle(1,4,6)(2,5,3),(0,1)(2,6)(3,7)(4,5)\rangle$\\
& &$\langle(1,2,5)(4,7,6),(0,1)(2,7)(3,6)(4,5)\rangle$\\
& &$\langle(1,2,5)(4,6,7),(0,1)(2,6)(3,7)(4,5)\rangle$\\
& &$\langle(1,6,7)(2,5,4),(0,2)(1,5)(3,6)(4,7)\rangle$\\
& &$\langle(1,6,7)(2,4,5),(0,2)(1,5)(3,7)(4,6)\rangle$\\
& &$\langle(1,6,7)(2,5,3),(0,2)(1,5)(3,7)(4,6)\rangle$\\
& &$\langle(1,6,7)(2,3,5),(0,2)(1,5)(3,6)(4,7)\rangle$\\
\hline
\multirow{9}{*}{(12,6)} & $C_{6} \times \sym_{3}$ & $\langle( 0,11,9,10, 1, 4)( 2, 7, 6, 3, 8, 5),( 0, 8,9, 2, 1, 6)( 3, 4, 5,11, 7,10)\rangle$\\ \cline{2-3}
& $\sym_{3} \times \sym_{3} $ & $\langle( 0,11,9,10, 1, 4)( 2, 5, 8, 3, 6, 7),( 0, 8,9, 2, 1, 6)( 3, 4, 7,10, 5,11)\rangle$\\ \cline{2-3}
&\multirow{7}{*}{$C_3\times A_4$}
 &$\langle( 2, 5,9)( 3, 6, 8)( 4,10,11),( 0, 2, 4)( 1,10, 8)( 3,9, 7)( 5,11, 6)\rangle$\\
&&$\langle( 2, 5,9)( 3, 6, 8)( 4,11,10),( 0, 2, 4)( 1,11, 8)( 3,9, 7)( 5,10, 6)\rangle$\\
&&$\langle( 2, 5,9)( 3, 6, 8)( 4, 7,11),( 0, 2, 4)( 1, 7, 8)( 3,9,10)( 5,11, 6)\rangle$\\
&&$\langle( 2, 5,9)( 3, 6, 8)( 4, 7,10),( 0, 2, 4)( 1, 7, 8)( 3,9,11)( 5,10, 6)\rangle$\\
&&$\langle( 2, 5,9)( 3, 6, 8)( 4,11, 7),( 0, 2, 4)( 1,11, 8)( 3,9,10)( 5, 7, 6)\rangle$\\
&&$\langle( 2, 5,9)( 3, 6, 8)( 4,10, 7),( 0, 2, 4)( 1,10, 8)( 3,9,11)( 5, 7, 6)\rangle$\\
&&$\langle( 1, 7,9)( 2, 6, 5)( 4,10, 8),( 0, 1, 8)( 2, 7,10)( 3, 5,9)( 4, 6,11)\rangle$\\
\hline
\multirow{2}{*}{(14,7)} & \multirow{2}{*}{$C_{7} \rtimes C_{6}$} & $\langle(1,4,7)(2,11,5)(3,9,13)(6,8,12),(0,2)(1,11)(3,10)(4,8)(5,13)(6,7)(9,12))\rangle$\\
&& $\langle(1,4,7)(2,11,5)(3,9,12)(6,8,13),(0,2)(1,11)(3,10)(4,8)(5,12)(6,7)(9,13))\rangle$\\ \cline{2-3}
\hline
\multirow{2}{*}{(16,16)} & \multirow{2}{*}{$(E_{16} \rtimes C_{2}) \rtimes C_{3}$} & $\langle (1, 8, 9)(2, 4, 15, 11, 5, 7)(3, 12, 6, 10, 13, 14), $\\
&& $\phantom{\langle}(0, 4, 7)(1, 13, 15)(2, 3, 10)(5, 14, 8)(6, 9, 12) \rangle$\\
\hline
\multirow{2}{*}{(19,19)} & \multirow{2}{*}{$C_{19} \rtimes C_{6}$} & $\langle (1,11,5,18,15,9)(2,7,12,8,3,13)(4,6,10,14,16,17),$\\
&&$ \phantom{\langle} (0,1,2,3,4,13,16,5,11,10,17,15,9,6,12,14,7,8,18) \rangle$\\
\hline
\end{tabular}
\caption{\label{T:grpPSCA3}Strength 3 PSCAs that are permutation groups}
\end{center}
\end{table}

\begin{table}[htb!]
\begin{center}
\begin{tabular}{| c | c | c | c |}
\hline
$(v, \lambda)$ & Group & Classes & Example\\
\hline
(6,1) &  $\sym_{4}$ & 1 & $\langle(1,3)(4,5),(0,1,4)(2,5,3)\rangle$\\
\hline
(6,2) & $C_{2} \times \sym_{4}$ & 1 &  $\langle (0, 5, 2, 1), (0, 1, 3, 2, 5, 4) \rangle$ \\
\hline
(7,7) & $PSL(3, 2)$ & 9 & $\langle (0, 2, 3, 4, 6, 5, 1), (0, 5, 4)(2, 6, 3) \rangle$\\
%& & $\langle (0, 2, 3, 4, 5, 6, 1), (0, 6, 4)(2, 5, 3) \rangle$\\
%& & $\langle (0, 2, 3, 5, 6, 4, 1), (0, 4, 5)(2, 6, 3) \rangle$\\
%& & $\langle (0, 2, 3, 6, 5, 4, 1), (0, 4, 6)(2, 5, 3) \rangle$\\
%& & $\langle (0, 2, 3, 5, 4, 6, 1), (0, 6, 5)(2, 4, 3) \rangle$\\
%& & $\langle (0, 2, 4, 3, 6, 5, 1), (0, 5, 3)(2, 6, 4) \rangle$\\
%& & $\langle (0, 2, 4, 3, 5, 6, 1), (0, 6, 3)(2, 5, 4) \rangle$\\
%& & $\langle (0, 2, 5, 3, 6, 4, 1), (0, 4, 3)(2, 6, 5) \rangle$\\
%& & $\langle (0, 2, 4, 5, 6, 3, 1), (0, 3, 5)(2, 6, 4] \rangle$\\
\hline
(8, 56) & $E_{8} \rtimes PSL(3,2)$ & 22 & $\langle (0, 7, 4, 2, 3, 1, 5), (0, 1, 3, 4)(2, 6, 7, 5)  \rangle $ \\
\hline
(9, 18) & $((E_{9} \rtimes Q_{8}) \rtimes C_{3} ) \rtimes C_{2}$ & 38 & $\langle (0, 8)(1, 3)(4, 5), (0, 3, 8)(1, 6, 4)(2, 7, 5) \rangle$\\
\hline
\multirow{2}{*}{(10, 30)} & $\sym_{6}$ & 102 & $\langle (0, 7)(2, 9)(3, 4), (0, 7, 5, 9, 1)(2, 3, 6, 8, 4) \rangle$\\
& $A_{6} . C_{2}$ & 51 & $\langle (0, 7, 3)(1, 2, 6)(4, 8, 5), (0, 5, 1, 6, 2, 4, 7, 3)(8, 9) \rangle$ \\
\hline
(12, 18) & $((E_{9} \rtimes Q_{8}) \rtimes C_{3} ) \rtimes C_{2}$ & 24 &  $\langle (2, 8)(3, 11)(6, 9)(7, 10), (0, 1, 9)(2, 4, 11)(3, 7, 5)(6, 10, 8) \rangle$\\ 
\hline
(13, 234) & $PSL(3, 3)$ & 130565 & $\langle (3, 9)(5, 7)(8, 10)(11, 12), (0, 1, 2, 3)(4, 11, 9, 8)(5, 12)(6, 10) \rangle$\\
\hline
\multirow{2}{*}{(21, 5040)} & \multirow{2}{*}{$PSL(3, 4) \rtimes \sym_{3}$} & \multirow{2}{*}{?} & $\langle (0, 16, 8, 9)(1, 4, 3, 20, 5, 13, 18, 19)(2, 6, 10, 7, 14, 17, 11, 12)\;\;$ \\
& & & $\;\;(0, 17, 7, 11, 10, 5, 4, 19)(1, 2, 16, 14, 15, 9, 12, 3)(8, 13, 18, 20) \rangle$\\
\hline
\end{tabular}
\caption{\label{T:grpPSCA4}Strength 4 PSCAs that are permutation groups }
\end{center}
\end{table}

\bigskip

Motivated by those PSCAs that we had earlier found which turned out to be permutation representations of groups, we decided to search for such objects directly. Fix $v,t$ and $\lambda$. We sought a representation in $\sym_v$ of some group of order $n=t!\lambda$. We began by deciding on positive integers $g_1,g_2$ and possibly $g_3$.  We then chose permutations of orders $g_1,g_2$ (and possibly $g_3$) and checked whether they generate a group of order $n$. For each group that we discovered in this way, we then tried to find a conjugate that was a PSCA. This was done by building up the PSCA one column at a time, backtracking whenever some $t$-sequence would be covered too many times. As the conjugate $h^{-1}Gh$ of a group $G$ is isomorphic in terms of sequence coverage to $Gh$, searching over all column permutations of $G$ for a PSCA is equivalent to searching over all conjugates of $G$. Since we checked all conjugates of each group that we found, we were free to insist that the generator of order $g_1$ that we chose was lexicographically maximal amongst all of its conjugates. In particular, this meant we only had to consider one choice for each possible cycle structure of that generator.  Note that this method did not prejudge which group it was going to build. Many non-isomorphic groups of order $n$ may have generators of the specified orders. For example, there are 15 groups of order 24, but they all have a generating set with $(g_1,g_2)\in\{(12,4),(12,2),(8,3),(6,4),(3,2)\}$ or $(g_1,g_2,g_3)=(6,6,2)$. Similarly, the 5 groups of order 18 all have a generating set with $(g_1,g_2)\in\{(9,2),(6,6)\}$ or $(g_1,g_2,g_3)=(3,3,2)$. Of course, groups will typically have many different generating sets with suitable orders, and hence will be built multiple times. But we could be confident that every group of order $n$ that has some representation in $\sym_v$ would be built, and thus that our catalogue of PSCAs that are groups is exhaustive for $v\le14$ and $n\le42$.

In an alternative computation, we used GAP \cite{gap} to generate representatives of conjugacy classes of subgroups of $\sym_{v}$ and used the backtracking process described above in order to search over each conjugacy class. We have also performed ad hoc computations on some doubly transitive permutation groups. Some of those groups had too many conjugates to search exhaustively, so we randomly sampled conjugates instead. Our results are recorded in two tables. The first, \Tref{T:grpPSCA3}, records permutation groups that are strength 3 PSCAs but not strength 4 PSCAs. The second table, \Tref{T:grpPSCA4}, records permutation groups that are strength 4 PSCAs but not strength 5 PSCAs. In \Tref{T:grpPSCA3}, a representative of each PSCA-isomorphism class of each group is presented. As a crosscheck, we note that these results agree with those presented in \Tref{T:numPSCA}, which were found by a completely separate method. For reasons of space, in \Tref{T:grpPSCA4} we do not list representatives of each PSCA-isomorphism class. Rather, we just give the number of such classes (or a ? when random sampling of conjugates was used instead of an exhaustive search).

We know of few permutation groups that are PSCAs of strength 5, other than symmetric and alternating groups. These necessarily include the 5-transitive Mathieu groups $M_{12}$ and $M_{24}$. Perhaps more interestingly, we also found that
\begin{equation}\label{e:M11}
  \langle(1,7)(2,8)(3,4)(6,9),(0,2,10,6)(3,7,5,8)\rangle
\end{equation}
is one of 108 presentations of the (4-transitive) Mathieu group $M_{11}$ in $\sym_{11}$ that form PSCAs of strength 5.
No subgroup of $\sym_{11}$ forms a 
PSCA of strength 4, other than those isomorphic to $M_{11}$, $A_{11}$ or $\sym_{11}$.
Similarly,
\begin{equation}\label{e:M12}
\langle (2, 11, 8, 6)(3, 10, 4, 5), (0, 1, 2, 3, 4, 5, 11, 6, 7, 10, 8), (0, 9)(1, 8)(2, 5)(3, 6)(4, 7)(10, 11) \rangle
\end{equation}
is one of 161 presentations of the (5-transitive) Mathieu group $M_{12}$ in $\sym_{12}$ that form PSCAs of strength 6.
The presentations of $M_{11}$ are conjugates of each other, and similarly for $M_{12}$.
If we let $r \in \sym_{v}$ be the reverse permutation, i.e. $r(i) = (v - 1 - i)$, then for a permutation group $G \leq \sym_{v}$, we will find that $G$ and $rGr$ are isomorphic in terms of sequence coverage. Hence it is plausible that we may find presentations of the same group that are isomorphic as PSCAs. Indeed, this is the case for $M_{11}$ where the 108 presentations that form PSCAs of strength 5 can be reduced to 54 isomorphism classes. Meanwhile, the presentation of $M_{12}$ given in \eref{e:M12} is the only one of the 161 strength 6 PSCAs for which conjugation by $r$ leaves the underlying set of permutations unchanged. Thus, these 161 presentations that form PSCAs of strength 6 reduce to 81 isomorphism classes.

For the larger Mathieu groups we were unable to do exhaustive computations and again relied on random sampling. We found that 
\begin{equation}\label{e:M22}
  \begin{aligned}
  \langle&(0,1,20,4,2)(3,8,9,12,13)(5,16,10,11,18)(6,7,15,19,14),\\
  &(0,13,16,5,10)(1,14,19,4,2)(3,18,7,12,15)(9,21,11,20,17)\rangle
  \end{aligned}
\end{equation}
is a presentation of the (3-transitive) Mathieu group $M_{22}$ in $\sym_{22}$ that forms a PSCA of strength 5. Also
\begin{equation}\label{e:M24}
  \begin{aligned}
  \langle&(0,1,2,3,4,5,6,7,8,9,10,11,12,13,14,15,16,17,18,19,20,21,22),\\
 &(0,23)(1,22)(2,11)(3,15)(4,17)(5,9)(6,19)(7,13)(8,20)(10,16)(12,21)(14,18),\\
 &(2,16,9,6,8)(3,12,13,18,4)(7,17,10,11,22)(14,19,21,20,15)
  \rangle
  \end{aligned}
\end{equation}
is a presentation of the (5-transitive) Mathieu group $M_{24}$ in $\sym_{24}$ that forms a PSCA of strength 6. Its point stabilisers provide PSCAs of strength 5 in $\sym_{23}$ that are presentations of $M_{23}$.

%% For Gap, this M22 is
%M22:=Group((1,2,21,5,3)(4,9,10,13,14)(6,17,11,12,19)(7,8,16,20,15),(1,14,17,6,11)(2,15,20,5,3)(4,19,8,13,16)(10,22,12,21,18));

%%[[IMW note to self: Currently have 17116 isomorphism classes of the PSCA(16,3,16)
%% see ~/project/ian/PSCA/conjugate16/OurPSCA16-96.variants]]

\Tref{T:grpPSCA3} also includes the PSCA$(16, 3, 16)$ found earlier in
the section, and a PSCA$(19,3,19)$. Exhaustive searches were not
undertaken for either of these parameter sets. However, a partial
search found 17116 and 232 isomorphism classes, respectively, of
PSCA$(16,3,16)$ and PSCA$(19,3,19)$ that are conjugate to the examples
given in the table. Note that since isomorphism includes the option to
freely permute symbols, the only material effect of conjugation in
this context is to permute the columns of a PSCA.

A striking feature of results summarised in
\Tref{T:grpPSCA3} and \Tref{T:grpPSCA4} is that there are a number of cases of
non-isomorphic PSCAs being produced by similar sets of generators. For example,
starting from the PSCA$(6, 4, 1)$, if we conjugate the generating set by
the transposition $(2,5)$ we reach a PSCA$(6,3,4)$. A similar thing happens
if we use the transposition $(4,5)$. Conjugating the generating
set by a transposition has the effect of interchanging two columns of the
PSCA (and then exchanging two symbols to once again achieve the property
of having one row equal to the identity permutation).

Summarising our bounds on $g(v,t)$ derived from group presentations, we have:

\begin{theorem}\label{t:str4gval}
\mbox{ }
\begin{itemize}
\item For $v \le 11$, we have $g(v, 5) \le 66$,
\item For $v \le 12$, we have $g(v, 4) \le 18$, $g(v, 5) \le 792$ and $g(v, 6) \leq 132$,
\item For $v \le 13$, we have $g(v, 4) \le 234$,
\item For $v \le 21$, we have $g(v, 4) \le 5040$, and
\item For $v \le 22$, we have $g(v, 5) \le 3696$ and hence $g(v, 4) \le 18\,480$.
\item For $v \le 23$, we have $g(v, 5) \le 85\,008$ and hence $g(v, 4) \le 425\,040$.
\item For $v \le 24$, we have $g(v, 6) \le 340\,032$ and hence $g(v, 5) \le 2\,040\,192$ and $g(v,4)\le10\,200\,960$.
\end{itemize}
\end{theorem}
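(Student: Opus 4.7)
The plan is to deduce each bound from an explicit permutation group presentation that realises a PSCA with the stated parameters, then extend by monotonicity ($g(v,t)\ge g(v-1,t)$) and by strength reduction. The strength reduction step rests on the observation that a PSCA$(v,t,\lambda)$ consists of $t!\lambda$ permutations, each covering $\binom{v}{t'}$ sequences in $\sym_{v,t'}$, so by symmetric counting it is simultaneously a PSCA$(v,t',\lambda t!/t'!)$; hence $g(v,t')\le(t!/t'!)g(v,t)$ for $2\le t'\le t$.

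First I would establish the base cases. The rows of Table~\ref{T:grpPSCA4} labelled $(12,18)$, $(13,234)$ and $(21,5040)$ give a PSCA$(12,4,18)$, a PSCA$(13,4,234)$ and a PSCA$(21,4,5040)$ respectively. The generators in \eref{e:M11}, \eref{e:M12}, \eref{e:M22} and \eref{e:M24} produce copies of $M_{11}$, $M_{12}$, $M_{22}$ and $M_{24}$ in the corresponding symmetric groups, whose orders satisfy $|M_{11}|=5!\cdot66$, $|M_{12}|=6!\cdot132$, $|M_{22}|=5!\cdot3696$ and $|M_{24}|=6!\cdot340\,032$; I would check that each of these groups realises a PSCA with the corresponding advertised parameters. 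Finally, the point stabiliser of the $M_{24}$ presentation, computed directly from the generators, is a copy of $M_{23}$ in $\sym_{23}$ of order $|M_{24}|/24=5!\cdot85\,008$, which I would verify to be a PSCA$(23,5,85\,008)$.

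Each coverage verification is made manageable by \lref{l:transitivegrp}: every one of these groups is transitive on its ground set, so it suffices to check, for a single fixed symbol $w$ and position $i$, that every $t$-sequence with $w$ at position $i$ is covered exactly $\lambda$ times. This cuts the number of sequences to check by a factor of $v$, bringing even the $244\,823\,040$ permutations of $M_{24}$ within reach of a direct computation.

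With the base cases in hand the rest is mechanical. Monotonicity pushes each base bound downward in $v$. Strength reduction converts $g(12,6)\le132$ into $g(12,5)\le 6\cdot132=792$; $g(22,5)\le3696$ into $g(22,4)\le 5\cdot3696=18\,480$; $g(23,5)\le85\,008$ into $g(23,4)\le 5\cdot85\,008=425\,040$; and $g(24,6)\le340\,032$ into $g(24,5)\le6\cdot340\,032=2\,040\,192$ and $g(24,4)\le30\cdot340\,032=10\,200\,960$. The main obstacle is the computational verification that the Mathieu presentations actually form PSCAs of the claimed strengths; every other step is arithmetic bookkeeping.
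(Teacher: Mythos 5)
Your proposal is correct and takes essentially the same approach as the paper: the published proof of \tref{t:str4gval} simply cites the PSCA$(12,4,18)$, PSCA$(13,4,234)$ and PSCA$(21,4,5040)$ from \Tref{T:grpPSCA4} together with the Mathieu-group presentations \eref{e:M11}--\eref{e:M24} (with the PSCA$(23,5,85\,008)$ obtained from point stabilisers of \eref{e:M24}), leaving the monotonicity and strength-reduction bookkeeping implicit, exactly as you spell it out. One remark: your reduction factor $t!/t'!$ is the one actually consistent with the theorem's numbers (e.g.\ $10\,200\,960=30\cdot 340\,032$ and a PSCA$(v,t,\lambda)$ has $t!\lambda$ rows), whereas the introduction's stated factor $\binom{t}{t'}$ appears to be a slip.
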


\begin{proof}
  Examples of a PSCA$(12, 4, 18)$, a PSCA$(13, 4, 234)$ and a PSCA$(21, 4, 5040)$ are given in \Tref{T:grpPSCA4}.
  Also, we gave a PSCA$(11,5,66)$ in
  \eref{e:M11}, a PSCA$(12,6,132)$ in \eref{e:M12}, a PSCA$(22,5,3696)$ in \eref{e:M22} and a PSCA$(24,6,340\,032)$ in \eref{e:M24}, from which we derived a PSCA$(23,5,85\,008)$.
\end{proof}

\subsection*{Acknowledgements}
The authors are grateful to Jingzhou Na, Jonathan Jedwab and Shuxing Li
for sharing the results of their ongoing investigation \cite{Na21}, \cite{Njl22}, 
which has paralleled our own.
We are also very grateful to Daniel Horsley who has been very generous with
his time and advice. 
This research was supported by the Monash eResearch Centre
through the use of the MonARCH
HPC Cluster. Computations in \sref{s:grp} were facilitated by GAP software
\cite{gap}.

%%%  Squashing the bibliography together a bit
 
  \let\oldthebibliography=\thebibliography
  \let\endoldthebibliography=\endthebibliography
  \renewenvironment{thebibliography}[1]{%
    \begin{oldthebibliography}{#1}%
      \setlength{\parskip}{0.25ex}%
      \setlength{\itemsep}{0.25ex}%
  }%
  {%
    \end{oldthebibliography}%
  }

\end{document}